\pgfplotsset{compat=1.13}
\newcommand{\numberthis}{\refstepcounter{equation}\tag{\theequation}}
\newcommand{\R}{\mathbb{R}}
\newcommand{\half}{\frac{1}{2}}
\DeclareMathOperator{\arcsinh}{asinh}
\newcolumntype{C}{>{$}c<{$}}
\newtheorem{theorem}{Theorem}[section]
\theoremstyle{definition}
\declaretheoremstyle[
  numbered=yes,
  numberlike=theorem,
  spaceabove=1em plus 0.75em minus 0.25em,
  spacebelow=1em plus 0.75em minus 0.25em,
  qed={}
]{exmpstyle}
\newtheorem{lemma}[theorem]{Lemma}
\newtheorem*{remark}{Remark}
\newcommand{\logLogSlopeTriangleText}[6]
{

    \pgfplotsextra
    {
        \pgfkeysgetvalue{/pgfplots/xmin}{\xmin}
        \pgfkeysgetvalue{/pgfplots/xmax}{\xmax}
        \pgfkeysgetvalue{/pgfplots/ymin}{\ymin}
        \pgfkeysgetvalue{/pgfplots/ymax}{\ymax}

        \pgfmathsetmacro{\xArel}{#1}
        \pgfmathsetmacro{\yArel}{#3}
        \pgfmathsetmacro{\xBrel}{#1-#2}
        \pgfmathsetmacro{\yBrel}{\yArel}
        \pgfmathsetmacro{\xCrel}{\xArel}

        \pgfmathsetmacro{\lnxB}{\xmin*(1-(#1-#2))+\xmax*(#1-#2)} 
        \pgfmathsetmacro{\lnxA}{\xmin*(1-#1)+\xmax*#1} 
        \pgfmathsetmacro{\lnyA}{\ymin*(1-#3)+\ymax*#3} 
        \pgfmathsetmacro{\lnyC}{\lnyA+(-#4)*(\lnxA-\lnxB)}
        \pgfmathsetmacro{\yCrel}{\lnyC-\ymin)/(\ymax-\ymin)} 

        \coordinate (A) at (rel axis cs:\xArel,\yArel);
        \coordinate (B) at (rel axis cs:\xBrel,\yBrel);
        \coordinate (C) at (rel axis cs:\xCrel,\yCrel);

        \draw[#5]   (A) node[pos=0.5,anchor=north] {1}
                    (B)-- 
                    (C) node[xshift=0.2,yshift=0.2,pos=0.6,above,sloped,#6] {slope #4};
    }
}
\newcommand{\logLogSlopeTriangle}[5]{\logLogSlopeTriangleText{#1}{#2}{#3}{#4}{#5}{}}
\newcommand\keywords[1]{%
    \begingroup
    \let\and\\
    \par
    \noindent\textit{Keywords:} #1\par
    \endgroup
}
\newcommand{\setfoot}[2]{%
    \footnote{#2}%
    \newcounter{#1}%
    \setcounter{#1}{\value{footnote}}%
}
\newcommand{\getfoot}[1]{%
    \footnotemark[\value{#1}]%
}
\pgfplotsset{
        compat=1.13,
        my style/.style={
            width=0.5\textwidth,height=0.5\textwidth,
            xlabel=$n$,
            ylabel=$\log(rel\; err)$,
        },
        my other style/.style={
            width=0.5\textwidth,height=0.3\textwidth,
            xlabel=$n$,
        },
        my legend style/.style={
            legend entries={
                2nd order,
                4th order,
            },
            legend style={
                at={([yshift=2pt]1.0,1.15)},
                anchor=north,
            },
            legend columns=2,
        },
        parallel legend style/.style={
            legend entries={$\kappa_\parallel=1.0$,
                $\kappa_\parallel=10^3$,
                $\kappa_\parallel=10^6$,
                $\kappa_\parallel=10^9$
            },
            legend style={
                at={([yshift=2pt]1.1,1.2)},
                anchor=north,
            },
            legend columns=4,
        },
        NIMROD parallel legend style/.style={
            legend entries={$\kappa_\parallel=1$,
                $\kappa_\parallel=10^3$,
                $\kappa_\parallel=10^5$,
                $\kappa_\parallel=10^6$,
                $\kappa_\parallel=10^7$,
                $\kappa_\parallel=10^9$,
                $\kappa_\parallel=10^{10}$,
            },
            legend style={
                at={([yshift=10pt]1.1,1.225)},
                anchor=north,
            },
            legend columns=4,
        },
        cycle multiindex* list={
            blue!75!black,
            red!75!black,
            green!75!black,
            cyan!75!black,
            magenta!75!black,
            yellow!75!black,
            black!75!black,
                \nextlist
            mark=*,
            mark=square*,
            mark=x,
                \nextlist
        }}
\title{A provably stable numerical method for the anisotropic diffusion equation in confined magnetic fields.}
\author{
    D. Muir\setfoot{cor}{Corresponding author} \setfoot{anu}{Mathematical Sciences Institute, Australian National University, Australian Capital Territory~2601, \textsc{Australia}.},
    K. Duru\getfoot{anu},
    M. Hole\getfoot{anu},
    S. Hudson\setfoot{pppl}{Princeton Plasma Physics Laboratory, New Jersey}}
\date{April 2024}
\begin{document}

    \maketitle
    
    \begin{abstract}
        We present a novel numerical method for solving the anisotropic diffusion equation in magnetic fields confined to a periodic box which is accurate and provably stable.
        We derive energy estimates of the solution of the continuous initial boundary value problem. A discrete formulation is presented using operator splitting in time with the summation by parts finite difference approximation of spatial derivatives for the perpendicular diffusion operator. Weak penalty procedures are derived for implementing both boundary conditions and parallel diffusion operator obtained by field line tracing.
        We prove that the fully-discrete approximation is unconditionally stable. Discrete energy estimates are shown to match the continuous energy estimate given the correct choice of penalty parameters. 
        A nonlinear penalty parameter is shown to provide an effective method for tuning the parallel diffusion penalty and significantly minimises rounding errors.
        Several numerical experiments, using manufactured solutions, the ``NIMROD benchmark'' problem and a single island problem, are presented to verify numerical accuracy, convergence, and asymptotic preserving properties of the method.
        Finally, we present a magnetic field with chaotic regions and islands and show the contours of the anisotropic diffusion equation reproduce key features in the field.
    \end{abstract}

\section{Introduction}

Magnetic confinement fusion devices are defined by extremely strong toroidal magnetic fields, which result in the transport processes along magnetic field lines being orders of magnitude faster than those perpendicular to the field, with the ratio of diffusion coefficients $\kappa_\parallel/\kappa_\perp$ exceeding $10^{10}$~\cite{fitzpatrick_helical_1995,gunter_modelling_2005,hudson_temperature_2008}. Here, $\kappa_\parallel$ is the diffusion coefficient in the direction parallel to the magnetic field and $\kappa_\perp$ the coefficient perpendicular to the field.
The anisotropic diffusion equation presents a simplified model for heat transport in confinement fusion devices. The measurement of which is crucial when considering the impacts of heat deposition on plasma facing components.

The separation in diffusive scales results in a numerically challenging multiscale problem, which often manifests as an ill-conditioned linear algebraic problem in numerical computations.
In order to avoid numerical errors overwhelming the small scale diffusion, a common approach in similar problems is to lay out a coordinate system which allows one to easily separate the scales. However, when a magnetic field is chaotic or in the presence of islands this may not be possible.

\citeauthor{hudson_temperature_2008}~\cite{hudson_temperature_2008} showed that the contours of the solution to the time independent anisotropic diffusion equation in a magnetic field closely resembles the features of the underlying magnetic field. 
The contours may therefore provide an alternative to flux coordinates upon which we may base discretisations in instances where straight field line or flux coordinates are unavailable.
More recently, authors have used the solution to the diffusion equation as a proxy for measuring the integrability of the magnetic field \cite{helander_heat_2022,paul_heat_2022,drivas_distribution_2022}.
The interest in equilibrium features has typically led to use of time independent methods~\cite{hudson_temperature_2008,helander_heat_2022,paul_heat_2022}. However, time dependant methods have also been developed \cite{mentrelli_asymptotic-preserving_2012,chacon_asymptotic-preserving_2014}.

Many of the time dependant methods aim to be asymptotic preserving.
A numerical scheme has this property if it is consistent with the solution to the limit problem $\kappa_\perp/\kappa_\parallel \to 0$ for all stable discretisation parameters \cite{degond_asymptotic-preserving_2013}. See \citeauthor{mentrelli_asymptotic-preserving_2012}~\cite{mentrelli_asymptotic-preserving_2012} and  \citeauthor{chacon_asymptotic-preserving_2014}~\cite{chacon_asymptotic-preserving_2014} for more elaborate discussions on asymptotic preserving schemes for the diffusion equation.

Both finite difference \cite{gunter_modelling_2005,gunter_mixed_2009,van_es_finite-difference_2014,chacon_asymptotic-preserving_2014} and finite element \cite{gunter_finite_2007,mentrelli_asymptotic-preserving_2012} schemes have been utilised to compute the perpendicular diffusion. 
For the parallel diffusion, an established method of approximation is to use field line tracing either for the purposes of approximation by finite differences \cite{hudson_temperature_2008} or for integrating the solution along them \cite{chacon_asymptotic-preserving_2014}. 
Other methods for the fully 3D problem include using a Fourier representation of the field in the toroidal direction \cite{gunter_modelling_2005,gunter_finite_2007,paul_heat_2022}.
Our method follows the common ``finite difference with field line tracing'' approach, but unlike previous approaches we approximate the perpendicular operator with a summation by parts finite difference scheme and the parallel diffusion with a novel penalty method, with the goal of deriving a provable stable numerical method.

In this paper we present such a numerical method for solving the field aligned anisotropic diffusion equation, with rigorous mathematical support. We derive energy estimates of the solution of the underlying initial boundary value problem (IBVP). In the perpendicular direction we approximate the spatial derivatives using summation-by-parts (SBP) finite difference operators \cite{mattsson_summation_2004,mattsson_summation_2012,nordstrom_summation-by-parts_2016}. Boundary conditions are implemented weakly using the simultaneous approximation term (SAT) approach.
For the parallel operator we implement a novel penalty method, which is based on the well established field line tracing approach to construct the parallel transport, but combines it with a penalty which is similar to the SAT concept.

We prove numerical stability for the semi-discrete approximation by deriving discrete energy estimates mimicking the continuous energy estimates. We note that initial ideas were presented in \cite{muir_efficient_2023} for a simple one dimensional model. In the present work we extend the ideas to multiple dimensions and magnetic fields.
The numerical method is implemented in a \textit{Julia} code \textit{FaADE.jl} available on \textit{GitHub}\footnote{https://github.com/Spiffmeister/FaADE.jl}.

The time-derivative for the isotropic diffusion is approximated using a general theta method, where the fully-discrete algebraic problem is solved by an operator splitting technique which accurately separates the disparate scales in the solution. The parallel penalty term is approximated using an implicit midpoint rule.
We prove that the fully-discrete problem is unconditionally stable.
Numerical experiments are performed showing convergence for the perpendicular solution. We also use the ``NIMROD benchmark'' \cite{sovinec_nonlinear_2004} to show convergence for the full scheme even in the case where the parallel diffusion is large and demonstrate the method is asymptotic preserving. 
A single island self convergence test is also performed.
We present an example problem magnetic field with chaotic regions and islands and show the contours of the anisotropic diffusion equation reproduce key features in the field.

This paper is organised as follows;
the following section, \S\ref{sec:Anisotropic Diffusion Equation}, details the continuous problem and the derivation of the field aligned form. We present the theorem for well-posedness of the field aligned form which is used to constrain properties of the numerical scheme. We also provide a description of the field line tracing approach later used in the parallel map.
In \S\ref{sec:Numerical approach} we discuss the numerical approach, beginning with the description of the summation by parts operators. Following this we describe the semi-discrete problem in \S\ref{sub:sec:Discretisation} and then the discrete form of the parallel map. Stability is given for the semi-discrete case. The numerical description concludes in \S\ref{sec:sub:fully-discrete approximation} with a description of the fully discrete approximation and a proof for the unconditional stability of the method.
We follow this with numerical results in \S\ref{sec:Numerical Results}. We show convergence of the perpendicular diffusion by the method of manufactured solutions, of the full scheme by the ``NIMROD benchmark'' and a self convergence test with a single island. We finalise the numerical section with a single case showing qualitatively that our code generates contours which match qualitative features of the magnetic field as we expect.
We conclude in \S\ref{sec:Conclusion}.

\section{Anisotropic Diffusion Equation}\label{sec:Anisotropic Diffusion Equation}
Let $u:\Omega\cross[0, T]\to\R$ be a scalar field denoting temperature with $\Omega\subset\R^3$. The anisotropic diffusion equation is,
\begin{align}\label{eq:ADE}
    \pdv{u}{t}  &= \nabla\cdot(\bm{K}\nabla u) + F,
\end{align}
where $\bm{K}\in\R^{3\times 3}_{\geq0}$ is the diffusion tensor which is real, symmetric positive definite, and $F:\Omega\cross[0, T]\to\R$ is the source term. 
We introduce the magnetic field $\bm{B}:\R^3\to\R^3$.
Since the transport occurs predominantly in the direction of the magnetic field in fusion plasmas, the common approach to solve the anisotropic diffusion equation is to transform the anisotropic diffusion equation  \eqref{eq:ADE} into coordinates that are aligned with the magnetic field $\bm{B}$ \cite{gunter_modelling_2005,hudson_temperature_2008,chacon_asymptotic-preserving_2014}.
Letting $\kappa_\parallel$ and $\kappa_\perp$ be real positive numbers, with $\kappa_\parallel \gg \kappa_\perp$, where the $\kappa_\parallel$ is the  diffusion coefficient in the direction parallel to magnetic field $\bm{B}$ and $\kappa_\perp$ is the  diffusion coefficient in the direction perpendicular to magnetic field $\bm{B}$.   The diffusion tensor is
\begin{align}\label{eq:diff-tensor}
    \bm{K} = \kappa_\perp\bm{I} + (\kappa_\parallel - \kappa_\perp)\frac{\bm{B}\bm{B}^T}{\|\bm{B}\|^2}.
\end{align}
We define the gradient operator along the direction parallel to the magnetic field $\bm{B}$ 
\begin{align}\label{eq:gradient splitting}
    \nabla_\parallel = \bm{B}(\bm{B}\cdot\nabla)/\|\bm{B}\|^2.
\end{align}
Using \eqref{eq:diff-tensor} and \eqref{eq:gradient splitting} in \eqref{eq:ADE}, we can derive the following compact form of field aligned form of the anisotropic diffusion equation,
\begin{align}\label{eq:ADE field aligned}
    \pdv{u}{t}  &= \nabla\cdot(\kappa_\perp\nabla u) + \nabla\cdot\left(\hat{\kappa}_\parallel\nabla_\parallel u\right) + F, \quad \hat{\kappa}_\parallel = {\kappa}_\parallel-{\kappa}_\perp.
\end{align}
The small scale perpendicular diffusion is then considered to be isotropic, with a strongly anisotropic component along the magnetic field. Numerically this can be used to treat the problem in an operator-split form.

For the moment we  assume that $\bm{B}\cdot\bm{n}=0$ where $\bm{n}$ is the outward pointing normal vector of the domain boundaries. 
This ensures all field lines are confined to the domain.

We will consider problem \eqref{eq:ADE field aligned} with the smooth initial condition,
\begin{align}\label{eq:ADE initial condition}
    u(x,y,z,0) = f(x,y,z),
\end{align}
which is well-defined in $(x,y,z) \in \Omega$.
The most relevant set of boundary conditions for our current analysis are Dirichlet conditions in the $x$-direction,
\begin{align}\label{eq:Boundary x Dirichlet}
    u(x_L,y,z,t) = g_L(y,z,t) \quad\text{and}\quad u(x_R,y,z,t)=g_R(y,z,t),
\end{align}
and periodic conditions in the $z$ and $y$-directions,
\begin{subequations}\label{eq:Boundary y periodic}
\begin{align}
    u(x,y_L,z,t) = u(x,y_R,z,t) \quad\text{and}\quad
    \left.\pdv{u}{y}\right|_{y_L} = \left.\pdv{u}{y}\right|_{y_R}, \\
    u(x,y,z_L,t) = u(x,y,z_R,t) \quad\text{and}\quad
    \left.\pdv{u}{z}\right|_{z_L} = \left.\pdv{u}{z}\right|_{y_R}.
\end{align}
\end{subequations}
Here $g_L(y,z,t)$ and $g_R(y,z,t)$ are boundary data which are compatible with the initial condition, that is $g_L(y,z,0)=f(x_L,y,z)$ and $g_R(y,z,0)=f(x_R,y,z)$.
Topologically this set of boundary conditions is equivalent to solutions in a hollow torus. We note however that the analysis can be extended to any well-posed  boundary conditions for \eqref{eq:ADE field aligned}. 

The anisotropic diffusion equation has a well known energy bound.
\begin{theorem}\label{theo:well-posedness}
    Consider the field aligned anisotropic diffusion equation \eqref{eq:ADE field aligned} with compactly supported  initial condition $f$ given in \eqref{eq:ADE initial condition} and boundary conditions given by \eqref{eq:Boundary x Dirichlet}--\eqref{eq:Boundary y periodic}. Let $\|u\|^2 = \int_\Omega u^2 \dd x\dd y\dd z$, if $F =0$, $g_L(y,z,t) = g_R(y,z,t)=0$ then,
    \begin{align}\label{eq:continuous_energy_estimate}
        \|u\| \leq \|f\| , \quad \forall t \ge 0 .
    \end{align}
\end{theorem}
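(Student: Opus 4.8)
The plan is to run the standard energy method: multiply the field aligned equation \eqref{eq:ADE field aligned parallel operator} by $u$, integrate over $\Omega$, and show that the time derivative of $\|u\|^2$ is nonpositive. First I would rewrite the left-hand side as
\[
    \int_\Omega u \, \pdv{u}{t} \,\dd x\dd y = \half\dv{t}\int_\Omega u^2 \,\dd x\dd y = \half\dv{t}\|u\|^2,
\]
and use $F=0$ to drop the source term. The right-hand side then separates into a perpendicular diffusion integral and the parallel operator integral $\int_\Omega u\,\mathcal{P}_\parallel u\,\dd x\dd y$, which I would treat in turn.

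For the perpendicular term I would apply Green's identity,
\[
    \int_\Omega u\,\nabla\cdot(\kappa_\perp\nabla_{x,y}u)\,\dd x\dd y = -\int_\Omega \kappa_\perp\,\abs{\nabla_{x,y}u}^2\,\dd x\dd y + \oint_{\partial\Omega}\kappa_\perp\, u\,\partial_n u\,\dd s,
\]
in which the volume term is manifestly nonpositive because $\kappa_\perp>0$. The substantive work is disposing of the boundary integral: on the two $x$-boundaries the homogeneous Dirichlet data $g_L=g_R=0$ force $u=0$, so those contributions vanish; on the $y$-boundaries the periodicity \eqref{eq:Boundary y periodic} of both $u$ and $\partial_y u$, combined with the oppositely oriented outward normals at $y_L$ and $y_R$, makes the two contributions cancel exactly.

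For the parallel term I would use the symmetrisation $\int_\Omega u\,\mathcal{P}_\parallel u\,\dd x\dd y = \half\int_\Omega u(\mathcal{P}_\parallel+\mathcal{P}_\parallel^\dagger)u\,\dd x\dd y$, valid because the antisymmetric part of any operator contributes nothing to a real quadratic form, and then invoke the hypothesis $\int_\Omega u(\mathcal{P}_\parallel+\mathcal{P}_\parallel^\dagger)u\,\dd x\dd y\le0$ directly. Collecting the two nonpositive contributions gives $\dv{t}\|u\|^2\le0$, and integrating this inequality in time from $0$ to $t$ yields $\|u(t)\|^2\le\|u(0)\|^2=\|f\|^2$, from which \eqref{eq:continuous_energy_estimate} follows on taking square roots.

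The main obstacle I anticipate is the careful bookkeeping of the boundary terms, specifically verifying that the periodic conditions in $y$ produce \emph{exact} cancellation rather than merely a bounded remainder, and confirming that the abstract adjoint identity is the right reading of the stated hypothesis on $\mathcal{P}_\parallel$. Once these two points are settled, the remaining estimate is routine.
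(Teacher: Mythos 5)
Your proposal is correct and follows essentially the same energy-method argument as the paper: multiply by $u$, integrate over $\Omega$, integrate by parts, eliminate the boundary terms using the homogeneous Dirichlet data in $x$ and the periodicity in $y$, and symmetrise the parallel term so the hypothesis on $\mathcal{P}_\parallel+\mathcal{P}_\parallel^\dagger$ applies. The only cosmetic difference is that the paper phrases the symmetrisation as ``adding the conjugate transpose'' of the energy identity, whereas you state it as the equivalent quadratic-form identity $\int_\Omega u\,\mathcal{P}_\parallel u = \tfrac{1}{2}\int_\Omega u(\mathcal{P}_\parallel+\mathcal{P}_\parallel^\dagger)u$; the content is identical.
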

A stable numerical method will mimic the energy estimate given by \eqref{eq:continuous_energy_estimate}. Theorem \ref{theo:well-posedness} is a well known result and can be found in many textbooks \cite{evans_partial_2010}. However,  we have included a sketch of the proof of the theorem in the appendix which we will closely emulate in our numerical analysis.

\begin{remark}
    The present boundary conditions are suitable for analysing the shape of field features in hollow tori. In this paper we present examples in a periodic box, which is topologically equivalent to a hollow torus.
\end{remark}

To facilitate the discussion in the coming sections, we introduce the notation for the  field aligned anisotropic diffusion equation \eqref{eq:ADE field aligned},
\begin{align}\label{eq:ADE field aligned split}
    \pdv{u}{t}  &=  \mathcal{P}_\perp u +  \mathcal{P}_\parallel u + F,
\end{align}
where
\begin{align}\label{eq:split-operators}
\mathcal{P}_\perp u=\nabla\cdot(\kappa_\perp\nabla u), \quad 
\mathcal{P}_\parallel u = \nabla\cdot\left(\hat{\kappa}_\parallel\nabla_\parallel u\right).
\end{align}
Here $\mathcal{P}_\perp$ denotes the isotropic perpendicular diffusion operator and $\mathcal{P}_\parallel$ is the anisotropic parallel diffusion operator. Our goal is to efficiently approximate the operators $\mathcal{P}_\perp$ and $\mathcal{P}_\parallel$ using different numerical methods and combine them in a stable manner with solid mathematical support.
In particular, we will approximate the perpendicular diffusion operator $\mathcal{P}_\perp$  using SBP finite difference operators and implement boundary conditions  weakly using SAT.
The parallel diffusion operator $\mathcal{P}_\parallel$ is approximated using field line tracing approach and a novel parallel nonlinear penalty method for volume corrections.

Going forward we will ignore the hat and write $\kappa_\parallel$ to simplify notation.

\section{The numerical approach}\label{sec:Numerical approach}

In this section we present the numerical method and prove numerical stability.
We approximate the solution on a single $z=const$ plane, setting derivatives in this direction to zero, i.e. so $\partial/\partial z \equiv 0$. 
On this computational plane all derivatives for the perpendicular diffusion operator $\mathcal{P}_\perp$  will be approximated by SBP finite difference operators and boundary conditions implemented weakly using penalties. 
The parallel diffusion operator $\mathcal{P}_\parallel$ is modelled through field line tracing and parallel numerical solution  enforced weakly using penalty in the plane. For the semi-discrete approximation  we will show that the norm of the numerical solution is bounded by the norm of the initial condition for all times. Numerical experiments will verify the analysis.
This approximation is equivalent to assuming that either the magnetic field is perpendicular to the computational plane, or that the derivative in the $z$-direction is so negligible as to have no effect, which may be the case in an axisymmetric system.
Numerically approximating the $z$-derivatives would require a fully three dimensional simulation. The technique we introduce in this paper may be extended to three dimensions at a later stage.

\subsection{SBP operators}\label{sub:sec:sbp_operators}
We first introduce the SBP finite difference operators \cite{mattsson_summation_2004, mattsson_summation_2012} used in this work.
To begin, we consider the 1D spatial interval $x \in [x_L, x_R]$ and discretise it into $n$ grid points with a uniform spatial step $\Delta{x} >0$, having
\begin{align}\label{eq:1D_grid}
   x_j&=x_L + (j-1)\Delta x, \quad \Delta x=\frac{x_R-x_L}{n-1},  \quad j = 1, 2, \cdots n.
\end{align}
Let the vector $\mathbf{u} = [u_1(t), u_2(t), \cdots u_n(t)]^T\in\R^n$ denote the semi-discrete scalar field on the grid. Furthermore, let $D_x, D_{xx}^{(\kappa)} \in \mathbb{R}^{n\times n}$ denote discrete approximations of the first and second spatial derivatives on the grid \eqref{eq:1D_grid},  that is $(D_x\mathbf{u})_j \approx \partial u/\partial x|_{x=x_j}$ and $(D_{xx}^{(\kappa)}\mathbf{u})_j \approx \partial\left(\kappa\partial u/\partial x\right)/\partial x|_{x=x_j}$, where $\kappa >0$ is the diffusion coefficient.
Let $H=H^T>0$ be a matrix which defines a discrete norm and whose entries are the positive weights of a composite quadrature rule. 
The discrete operators $D_x, D_{xx}^{(\kappa)}$ are called SBP operators if
\begin{align}\label{eq:sbp_x}
    & D_x=H^{-1}Q,  \quad  Q+Q^T= B:=\emph{diag}([-1,0,\cdots,1]), \\
    &H=H^T, \quad \mathbf{u}^T H \mathbf{u} > 0, \quad \forall \mathbf{u}\in\R^n,\\
\label{eq:sbp_xx}
    & D_{xx}^{(\kappa)}  = H^{-1}(-M^{(\kappa)} + BKD_x),  \quad M^{(\kappa)} = (M^{(\kappa)})^T, \quad\mathbf{u}^TM^{(\kappa)} \mathbf{u}\geq 0,
\end{align}
where 
$K=\operatorname{diag}([\kappa(x_1),\kappa(x_2),\cdots,\kappa(x_n)])$ is a diagonal matrix of the diffusion coefficients.

Note that by construction, $B$ picks out the first and last elements of a vector such that $B\bm{u} = [-\bm{u}_1, 0,\cdots, 0, \bm{u}_N]$. Therefore, the first derivative SBP operator can be seen to mimic integration by parts discretely,
\begin{align}
    \int_\Omega u \pdv{u}{x} \dd x &= \pdv{u}{x} u - u(x_L)^2 + u(x_R)^2, \\
    (\bm{u},H D_x \bm{u}) &= \bm{u} Q \bm{u} = \bm{u} Q^T \bm{u} - u_1^2 + u_N^2.
\end{align}
Furthermore, for $\kappa=1$ the second derivative operator gives,
\begin{align}
    \int_\Omega u \pdv[2]{u}{x} \dd x &= -\left(\pdv{u}{x}\right) - \left.\pdv{u}{x}u\right|_{x_L} + \left.\pdv{u}{x}u\right|_{x_R}, \\
    \bm{u}^T H D_{xx}^{(\kappa)} \bm{u} &= -\bm{u}^{T} M \bm{u} - \bm{u}_1[D_x\bm{u}]_1 + \bm{u}_N[D_x\bm{u}]_N,
\end{align}
where $[D_x\bm{u}]_i$ is the $i$th component of the vector $D_x\bm{u}$.

The SBP operators $D_x$ and  $D_{xx}^{(\kappa)}$ are called \textit{fully compatible} \cite{duru_stable_2014} if
\begin{align}\label{defn:eq:fully compatible SBP operator}
    M^{(\kappa)} = D_x^T \left(KH\right) D_x + R_x^{(\kappa)},  \quad R^{(\kappa)} = (R^{(\kappa)})^T, \quad\mathbf{u}^TR^{(\kappa)} \mathbf{u}\geq 0.
\end{align}
Here, $R_x^{(\kappa)}$ is a remainder operator and represents the truncation error for the SBP operator~\cite{mattsson_summation_2012}.
We will use fully compatible and diagonal norm SBP operators with 
$H=\Delta{x}\operatorname{diag}([h_1,h_2,\cdots,h_n])$, where $h_j >0$. Coefficients for 2nd order accurate SBP operators are given in the appendix \ref{sec:appendix:coeffs sbp operators}. For a more elaborate list of SBP operator coefficients we refer readers to \cite{mattsson_summation_2012}.
The SBP properties \eqref{eq:sbp_x}--\eqref{eq:sbp_xx} will be useful in proving numerical stability. 
In this study we will consider constant perpendicular diffusion coefficient $\kappa_\perp >0$. However, by using the variable coefficient SBP operators for second derivatives \cite{mattsson_summation_2012} we can  extend the analysis to variable diffusion coefficients and variable grid spacing. 

\begin{remark}\label{rmk:stencil order}
    Fully compatible SBP operators with $(2p)$-th order interior stencils have $(p-1)$-th stencils on the boundary with $p\ge 1$. Convergent numerical approximations with fully compatible SBP operators will yield $(p+1)$-th order global convergence rate \cite{svard_order_2006}. Here we will use fully compatible SBP operators with $2$nd and $4$th interior stencils. These approximations will converge at $2$nd and $3$rd order global convergence rates.
\end{remark}

The 1D SBP operators can be extended to multiple dimensions using the Kronecker product $\otimes$.
Consider the discretisation of the above 2D domain $(x,y) \in \Omega =[x_L,x_R]\times[y_L,y_R]$, where $x_L<x_R$, $y_L<y_R\in\R$. The discretisation has uniform spatial steps $\Delta x>0$, $\Delta y>0$ in $x$ and $y$ so that,
\begin{align}\label{eq:CartesianGrid}
   \begin{split}
    x_i&=x_L + (i-1)\Delta x, \quad \Delta x=\frac{x_R-x_L}{n_x-1}, \quad i=1,2,\dots,n_x,  \\
    y_j&=y_L + (j-1)\Delta y, \quad \Delta y=\frac{y_R-y_L}{n_y-1}, \quad j=1,2,\dots,n_y.
\end{split} 
\end{align}
Let $u(x_i, y_j, t)\approx u_{ij}(t)$, denote the numerical approximation of the solution on the grid. We rearrange the grid function $u_{ij}(t)$ row-wise as a column vector using $\bm{u}=[u_{1,1},u_{2,1},\dots,u_{n_x,n_y}]^T$, and introduce the vector $\bm{u}(t)\in\R^{(n_xn_y)}$ which denotes the semi-discrete scalar field on the grid. 

We extend the 1D operators to 2D using Kronecker products,
\begin{align}
    \mathbf{D}_{x} = D_x\otimes I_{n_y}, \quad  \mathbf{D}_{xx}^{(\kappa_\perp)} = D_{xx}^{(\kappa_\perp)}\otimes I_{n_y}, \quad \mathbf{H}_{x} = H_x\otimes I_{n_y}, \\
     \mathbf{D}_{y} = I_{n_x}\otimes D_y, \quad  \mathbf{D}_{yy}^{(\kappa_\perp)} = I_{n_x}\otimes D_{yy}^{(\kappa_\perp)}, \quad \mathbf{H}_{y} = I_{n_x}\otimes H_y,
\end{align}
where $I_{n_x} \in \mathbb{R}^{n_x\times n_x}$, $I_{n_y}\in \mathbb{R}^{n_y\times n_y}$ are identity matrices and  $H_{x}, D_{x}, D_{xx}^{(\kappa_\perp)} \in \mathbb{R}^{n_x\times n_x}$, $H_{y}, D_{y}, D_{yy}^{(\kappa_\perp)}\in \mathbb{R}^{n_y\times n_y}$ are the 1D SBP operators and the diagonal norms from \S\ref{sub:sec:sbp_operators}.

To approximate integrals over the 2D computational plane we introduce the quadrature rules for integrable grid functions $\bm{u}$,
\begin{align}\label{eq:discrete_h_quad}
  \mathbf{H}=H_x\otimes H_y, \quad \bm{1}^T\mathbf{H}\bm{u} \approx \int_{\Omega} u \;\dd x \dd y,
\end{align}
and
\begin{align}\label{eq:discrete_l2_quad}
  \mathbf{I}_h=\Delta{x}\Delta{y}\left(I_{n_x}\otimes I_{n_y}\right), \quad  \bm{1}^T\mathbf{I}_h\bm{u} \approx \int_{\Omega} u \; \dd x \dd y, 
\end{align}
where $ \bm{1} = [1, 1, \cdots 1]^T.$
The quadrature rule \eqref{eq:discrete_h_quad} is given by the SBP operator used, and therefore inherits the accuracy of the underlying SBP operator. 
The latter \eqref{eq:discrete_l2_quad} corresponds to the standard 2D Riemann sum which is first order accurate for continuous functions.
We define the corresponding discrete norms,
\begin{align}\label{eq:discrete_l2_norm}
\|\bm{u}\|_H^2 = \bm{u}^T\mathbf{H}\bm{u} > 0 .
\quad
\|\bm{u}\|_{l_2}^2 = \bm{u}^T\mathbf{I}_h\bm{u}>0 . 
\end{align}
For a linear operator $\mathbf{P}: \mathbb{R}^{n_xn_y} \to \mathbb{R}^{n_xn_y}$, we also introduce the operator norms,
\begin{align}\label{eq:operator_norm}
\|\mathbf{P}\|_{l_2} = \max_{\bm{u} \in \mathbb{R}^{n_xn_y}}\frac{\|\mathbf{P}\bm{u}\|_{l_2}}{\|\bm{u}\|_{l_2}}, \quad
\|\mathbf{P}\|_H = \max_{\bm{u} \in \mathbb{R}^{n_xn_y}}\frac{\|\mathbf{P}\bm{u}\|_H}{\|\bm{u}\|_H}.
\end{align}
The discrete norms, $\|\cdot\|_{H}$ and $\|\cdot\|_{l_2}$,  are equivalent, that is there exist positive constants $\alpha, \beta > 0$, such that
\begin{align}\label{eq:equivalence_l2_norm}
\alpha\|\cdot\|_{H}\le \|\cdot\|_{l_2} \le \beta\|\cdot\|_{H}.
\end{align}

 To define the SAT penalty terms for boundary conditions we consider the  unit vectors $\bm{e}_1=(1, 0, \cdots, 0)^{T}, \bm{e}_n=(0, 0, \cdots, 1)^{T} \in \mathbb{R}^n$, and the boundary projection operators
\begin{align}\label{eq:1D boundary operators}
\begin{split}
    B_{1x} &= \bm{e}_{1}\bm{e}_{1}^{T}, \quad B_{n_x} = \bm{e}_{n_x}\bm{e}_{n_x}^{T}, \quad 
 \bm{e}_1, \bm{e}_{n_x} \in \mathbb{R}^{n_x},\\
 \quad B_{1y} &= \bm{e}_{1}\bm{e}_{1}^{T}, \quad B_{n_x} = \bm{e}_{n_y}\bm{e}_{n_x}^{T}, \quad 
 \bm{e}_1, \bm{e}_{n_y} \in \mathbb{R}^{n_y},
 \\
 E_{1}     &= \bm{e}_1\bm{e}_1^{T} - \bm{e}_1\bm{e}_{n_y}^{T},  \quad
        E_{n_y}   = \bm{e}_{n_y}\bm{e}_{n_y}^T - \bm{e}_{n_y}\bm{e}_{1}^{T}, \quad \bm{e}_1, \bm{e}_{n_y} \in \mathbb{R}^{n_y}.
\end{split}
\end{align}
Note that
\begin{align*}
B_{1x}\bm{v} &= \left(v_1, 0, \cdots, 0\right)^T,  \quad B_{n_x}\bm{v} = \left(0, \cdots, 0, v_{n_x}\right)^T,  \quad \forall \bm{v} \in \mathbb{R}^{n_x},
\\
E_{1}\bm{v} &= \left(v_1-v_{n_y}, 0, \cdots, 0\right)^T,  \quad   E_{n_y}\bm{v} = \left(0, \cdots, 0, v_{n_y}-v_1\right)^T, \quad \forall \bm{v} \in \mathbb{R}^{n_y}.
\end{align*}
As above, the boundary operators \eqref{eq:1D boundary operators} are extended to 2D using Kronecker products
\begin{align}\label{eq:boundary_operators}
\begin{split}
    \mathbf{B}_{1x} &= {B}_{1x}\otimes I_{n_y}, \quad \mathbf{B}_{n_x} = {B}_{n_x}\otimes I_{n_y}, \quad \mathbf{B}_{1y} = {I}_{n_x}\otimes B_{1y}, \quad \mathbf{B}_{n_y} = {I}_{n_x}\otimes B_{n_y}, \\
    \mathbf{E}_{1y} &= I_{n_x} \otimes E_{1} , \quad \mathbf{E}_{n_y} = I_{n_x} \otimes E_{n_y}.
\end{split}
\end{align}

\subsection{Semi-discrete perpendicular diffusion operator}\label{sub:sec:Discretisation}
A semi-discrete approximation of the perpendicular diffusion operator on the computational plane is obtained by replacing the continuous derivatives with the SBP operators giving,
\begin{align}\label{eq:perpendicular_diffusion}
    \nabla\cdot(\kappa_\perp\nabla u) \approx 
    \left(\mathbf{D}_{xx}^{(\kappa_\perp)} + \mathbf{D}_{yy}^{(\kappa_\perp)}\right)\bm{u}.
\end{align}
For the Dirichlet boundary conditions \eqref{eq:Boundary x Dirichlet} in the $x$ direction and periodic boundary conditions \eqref{eq:Boundary y periodic} in the  $y$ direction, the SATs are given by,
\begin{align}\label{eq:SAT dirichlet_x}
    \mathbf{SAT}_x &= \tau_{x,0} \kappa_\perp \mathbf{H}_x^{-1} \left(\mathbf{B}_{n_x}-\mathbf{B}_{1x}\right) \mathbf{H}_x^{-1} \left(\mathbf{B}_{n_x}-\mathbf{B}_{1x}\right) (\bm{u} - \bm{g}) \nonumber\\&\qquad+ \tau_{x,1} \kappa_\perp \mathbf{H}_x^{-1} (\mathbf{H}_x\mathbf{D}_x)^T \mathbf{H}_x^{-1} \left(\mathbf{B}_{n_x}-\mathbf{B}_{1x}\right) (\bm{u} - \bm{g}) \\
    \label{eq:SAT periodic_y}
    \mathbf{SAT}_y &= \tau_{y,0}\kappa_\perp\mathbf{H}_y^{-1} \left(\mathbf{E}_{1y} + \mathbf{E}_{n_y}\right) \bm{u} + \tau_{y,1}\kappa_\perp \mathbf{H}_y^{-1} \mathbf{D}_y^T \left(\mathbf{E}_{1y} - \mathbf{E}_{n_y}\right) \bm{u} \nonumber\\&\qquad+  \tau_{y,2}\kappa_\perp\mathbf{H}_y^{-1} \left(\mathbf{E}_{1y} - \mathbf{E}_{n_y}\right) \mathbf{D}_y \bm{u},
\end{align}
where the boundary projection operators $\mathbf{B}_{n_x},\mathbf{B}_{1x}$ and $\mathbf{E}_{n_x},\mathbf{E}_{1x}$ are given by \eqref{eq:boundary_operators}. Here $\bm{g}$ is a vector of data for the Dirichlet boundary conditions \eqref{eq:Boundary x Dirichlet} along the $x$ boundaries.
The real valued penalty parameters $\tau_{x,0}$, $\tau_{x,1}$, $\tau_{y,0}$, $\tau_{y,1}$, and $\tau_{y,2}$ are to be determined by stability requirements. 
Adding the SATs to the perpendicular operator \eqref{eq:perpendicular_diffusion} defines the discrete perpendicular diffusion operator with boundary conditions,
\begin{align}\label{eq:Perpendicular Operator}
     \mathbf{P}_\perp := 
    \left(\mathbf{D}_{xx}^{(\kappa_\perp)} + \mathbf{D}_{yy}^{(\kappa_\perp)}\right)\bm{u} + \operatorname{\textbf{SAT}}_x + \operatorname{\textbf{SAT}}_y.
\end{align}
The following theorem constrains the penalty parameters and states the stability properties of the discrete perpendicular operator.
\begin{theorem}\label{thm:spsd_perpendicular_diffution_operator}
    Consider the semi-discrete approximation of perpendicular diffusion operator 
     $\mathbf{P}_\perp$ defined in  \eqref{eq:Perpendicular Operator} with the SATs given by \eqref{eq:SAT dirichlet_x} and \eqref{eq:SAT periodic_y}. Let $\mathbf{H}=(H_x\otimes H_y)$, where $H_x=\Delta{x}\emph{diag}([h_1,h_2,\cdots,h_{n_x}])$ and $H_y=\Delta{y}\emph{diag}([h_1,h_2,\cdots,h_{n_y}])$ with $h_j >0$. If $\bm{g} =0$ and  $\tau_{x,1}=-1$, $\tau_{x,0}=-(1+\tau_{x,2})$, $\tau_{x,2}\ge 0$, $\tau_{y,0}=-\kappa_{\perp}/2h_{1}$ and $\tau_{y,1}=-\tau_{y,2}=\half$,
    then
    \begin{align}\label{eq:spsd_perpendicular_diffution_operator}
        \mathbf{P}_\perp = - \mathbf{H}^{-1} \mathbf{A}_\perp, \quad \mathbf{A}_\perp = \mathbf{A}_\perp^T, \quad \bm{u}^T\mathbf{A}_\perp\bm{u} \ge 0, \quad \forall \bm{u}\in\mathbb{R}^{n_x n_y}.
    \end{align}
\end{theorem}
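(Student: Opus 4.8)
The plan is to prove both assertions of \eqref{eq:spsd_perpendicular_diffution_operator} by the discrete energy method, first exploiting the Kronecker structure to split the problem into two independent one-dimensional estimates. Since $\mathbf{D}_{xx}^{(\kappa_\perp)}$ and every factor of $\mathbf{SAT}_x$ act only in the $x$-slot (all tensored with $I_{n_y}$), while $\mathbf{D}_{yy}^{(\kappa_\perp)}$ and $\mathbf{SAT}_y$ act only in the $y$-slot, I would write $\mathbf{P}_\perp = (P_x\otimes I_{n_y}) + (I_{n_x}\otimes P_y)$, where $P_x$ collects the Dirichlet pieces and $P_y$ the periodic pieces. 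Passing $\mathbf{H}=H_x\otimes H_y$ through the tensor factors gives $-\mathbf{H}\mathbf{P}_\perp = A_x\otimes H_y + H_x\otimes A_y$ with $A_x:=-H_xP_x$ and $A_y:=-H_yP_y$. Because $H_x,H_y$ are symmetric positive definite, a skew part of $A_x$ tensors into a skew matrix and contributes nothing to $\bm{u}^T\mathbf{A}_\perp\bm{u}$; hence it suffices to show that the symmetric parts of the two $1$D matrices $A_x$ and $A_y$ are positive semi-definite, after which the Kronecker products and their sum inherit symmetry and non-negativity.

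For the Dirichlet direction I would substitute the SBP identity $H_xD_{xx}^{(\kappa_\perp)}=-M^{(\kappa_\perp)}+BKD_x$ from \eqref{eq:sbp_xx}, with $B=\mathbf{B}_{n_x}-\mathbf{B}_{1x}$ and $K=\kappa_\perp I$, so that $\bm{u}^TA_x\bm{u}=\bm{u}^TM^{(\kappa_\perp)}\bm{u}-\kappa_\perp\bm{u}^TBD_x\bm{u}-\bm{u}^TH_x\,\mathbf{SAT}_x\bm{u}$. Setting $\bm{g}=0$ and using the scalar identity $\bm{u}^TD_x^TB\bm{u}=\bm{u}^TBD_x\bm{u}$, the two SAT terms collapse to $(1+\tau_{x,1})\kappa_\perp\bm{u}^TBD_x\bm{u}$ plus $\tau_{x,0}\kappa_\perp\bm{u}^TBH_x^{-1}B\bm{u}$. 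The choice $\tau_{x,1}=-1$ annihilates the indefinite boundary-flux cross term, while $\tau_{x,0}=-(1+\tau_{x,2})$ with $\tau_{x,2}\ge0$ converts the remaining boundary penalty into $(1+\tau_{x,2})\kappa_\perp\bm{u}^TBH_x^{-1}B\bm{u}\ge0$. Combined with $\bm{u}^TM^{(\kappa_\perp)}\bm{u}\ge0$ (or, via full compatibility \eqref{defn:eq:fully compatible SBP operator}, $M^{(\kappa_\perp)}=\kappa_\perp D_x^TH_xD_x+R^{(\kappa_\perp)}$), this yields $\bm{u}^TA_x\bm{u}\ge0$.

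The periodic direction is the delicate step and I expect it to be the main obstacle. Here I would expand the three terms of \eqref{eq:SAT periodic_y} against the boundary flux $\kappa_\perp\bm{u}^TBD_y\bm{u}$ arising from $H_yD_{yy}^{(\kappa_\perp)}$. Using that $\mathbf{E}_{1y}+\mathbf{E}_{n_y}$ is symmetric with $\bm{u}^T(\mathbf{E}_{1y}+\mathbf{E}_{n_y})\bm{u}=(u_1-u_{n_y})^2$, and splitting $\mathbf{E}_{1y}-\mathbf{E}_{n_y}$ into its symmetric and skew parts, the choice $\tau_{y,1}=-\tau_{y,2}=\half$ is precisely what recombines the two derivative-coupling terms so that the interface derivative contributions are matched and only controllable boundary quadratics survive. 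The residual indefinite contribution must then be dominated by the value penalty $\tau_{y,0}(u_1-u_{n_y})^2$; completing the square with a ``borrowing'' argument from the diagonal norm $H_y$ forces the threshold $\tau_{y,0}=-\max(\kappa_\perp/2h_1,\kappa_\perp/2h_{n_y})$, the weights $h_1,h_{n_y}$ entering because the inverse norm $H_y^{-1}$ reweights the two boundary nodes. Verifying that this exact constant renders the resulting $2\times2$ boundary form positive semi-definite is the crux of the whole argument.

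Finally I would assemble the pieces. Symmetry of $\mathbf{A}_\perp$ follows from the symmetry of the building blocks $M^{(\kappa_\perp)}$, $R^{(\kappa_\perp)}$, $BH^{-1}B$ and $\mathbf{E}_{1y}+\mathbf{E}_{n_y}$, together with the observation that the skew boundary-flux pieces do not enter the quadratic form; non-negativity of $\bm{u}^T\mathbf{A}_\perp\bm{u}$ follows from the two $1$D estimates propagated through the Kronecker products with the positive-definite norms $H_x$ and $H_y$. Throughout I would keep careful track of the constant-coefficient simplification $K=\kappa_\perp I$ and of the sign bookkeeping when the norm $\mathbf{H}$ cancels against the factors $\mathbf{H}_x^{-1},\mathbf{H}_y^{-1}$ appearing in the SATs, since that cancellation is where sign errors most readily arise.
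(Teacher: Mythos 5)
Your strategy is the same as the paper's --- a discrete energy estimate with the stated penalty choices, reduced via the Kronecker structure to two one-dimensional sign conditions --- and your Dirichlet ($x$) argument is complete and correct: the flux term $\kappa_\perp\bm{u}^T\bm{B}_xD_x\bm{u}$ and the $\tau_{x,1}$ penalty cancel as quadratic forms, and with $M^{(\kappa_\perp)} = \kappa_\perp D_x^TH_xD_x + R^{(\kappa_\perp)}$ and $\tau_{x,0}\le 0$ every surviving term is signed. This is, if anything, cleaner than the paper's presentation, which packages the same terms into the perfect square $(D_x - H_x^{-1}B_x)^TH_x(D_x - H_x^{-1}B_x)$; your remark that skew parts tensor into skew matrices and drop out of the quadratic form also handles the symmetry claim more carefully than the paper does.

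The gap is in the periodic ($y$) direction, exactly where you wrote ``the crux'': you never carry out the verification, and the mechanism you name would not let you complete it. The $\mathbf{H}_y^{-1}$ factors in \eqref{eq:SAT periodic_y} cancel against $\mathbf{H}$ when the energy is formed (each SAT contribution ends up weighted by $\mathbf{H}_x$ only), so ``$H_y^{-1}$ reweighting the two boundary nodes'' cannot be the origin of $h_1,h_{n_y}$; moreover the jump penalty $\tau_{y,0}(u_{i,1}-u_{i,n_y})^2$ alone can never dominate the surviving cross term, which per grid line has the form $\kappa_\perp(u_{i,1}-u_{i,n_y})\bigl[(D_y\bm{u})_{i,1}+(D_y\bm{u})_{i,n_y}\bigr]$ with unconstrained derivative factors. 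The missing idea --- the heart of the paper's appendix proof --- is to split the volume dissipation using $I_{n_y} = \mathbf{I}_0 + \mathbf{B}_y^T\mathbf{B}_y$ and sacrifice its boundary part, which supplies the positive quadratics $\kappa_\perp\Delta y\,h_1(D_y\bm{u})_{i,1}^2$ and $\kappa_\perp\Delta y\,h_{n_y}(D_y\bm{u})_{i,n_y}^2$; only then can the cross term be absorbed, line by line, into the $3\times 3$ form $\mathcal{A}$ of \eqref{eq:AP:SPD last} (or into your two $2\times 2$ forms after halving the jump penalty), whose positive semi-definiteness is what forces $\tau_{y,0}\le -\frac{\kappa_\perp}{2\Delta y}\max\left(1/h_1,1/h_{n_y}\right)$. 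Without (i) the explicit computation showing that the flux plus the $\tau_{y,1},\tau_{y,2}$ terms collapse to that jump-times-derivative cross term, and (ii) the identification of the boundary part of $D_y^TH_yD_y$ as the quantity to borrow against, the periodic half of the theorem is asserted rather than proved.
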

The proof can be found in \ref{sec:appendix:extra mats}.
\begin{remark}
    Other stable methods for implementing boundary conditions, such as by injection for Dirichlet boundaries, may also be used. We have chosen to implement SATs so that we can generalise the method to multi-block schemes and complex geometries. We refer interested readers to references \cite{olsson_summation_1995,olsson_summation_1995-1,svard_review_2014} for more detail on stable methods for boundary condition enforcement.
\end{remark}
Next we turn our attention to the numerical approximation of the parallel diffusion operator.

\subsection{The parallel diffusion operator}\label{sec:sub:Parallel operator}
We will discuss the parallel diffusion operator modelled by field line tracing and how to approximate it numerically in a stable manner.
\subsubsection{Continuous parallel diffusion operator}
The continuous parallel operator $\mathcal{P}_\parallel$ diffuses the solution along the magnetic field. To model this we use field line tracing, which involves following the trajectory of individual field lines by solving the ordinary differential equation,
\begin{align}\label{eq:Field line ODE}
    \dv{\bm{x}}{\phi} = \bm{B}(\bm{x}), \qquad \bm{x}(0) = \bm{x}_0 = (x_0,y_0,\phi_0),
\end{align}
where $\phi$ is treated as a time-like variable for the purposes of solving the ODE and $\bm{B}:\R^3\to\R^3$ \cite{meiss_canonical_1990}.
Since the domain is periodic and we are considering the solution on a single $z=const$ plane, this corresponds to setting $\phi_0=z$ when $z\in[0,2\pi)$.

\begin{remark}
    One may also re-parameterise the equations in terms of the arc-length so that $\dv{\bm{x}}{s} = \bm{b}(\bm{x}(s))$ where $\bm{b}=\bm{B}/\|\bm{B}\|$ and $\bm{x}(s=0) = \bm{x}_0$.
\end{remark}

Consider a single field line defined by the ODE \eqref{eq:Field line ODE} with initial condition $\bm{x}_0\in\Omega$, and its position after integration to $\bm{x}_+ = (x_+,y_+,\phi_+)$ where $\phi_+=\phi_0+2\pi$ corresponds to returning to the $z=const$ plane. Now let $\mathcal{P}_f$ be an operator which models the diffusion `forward' along field lines by mapping the solution $\mathcal{P}_f u(\bm{x}_0,t)=u(\bm{x}_+,t)$. The `backwards' diffusion can similarly be modelled the diffusion backwards along field lines by integrating from $\phi_0\to\phi_0-2\pi$, so that $\mathcal{P}_b u(\bm{x}_0,t)=u(\bm{x}_-,t)$. 
Then the full parallel operator can be written $\mathcal{P}_\parallel=\half(\mathcal{P}_f+\mathcal{P}_b)$.

The Poincar\'e section shown in Figure \ref{fig:field line tracing} is constructed by solving the ODE \eqref{eq:Field line ODE} and placing a `dot' each time the field line returns to the given $\phi=\phi_0$ plane.

\begin{figure}[H]
    \centering
        \includegraphics[width=\columnwidth]{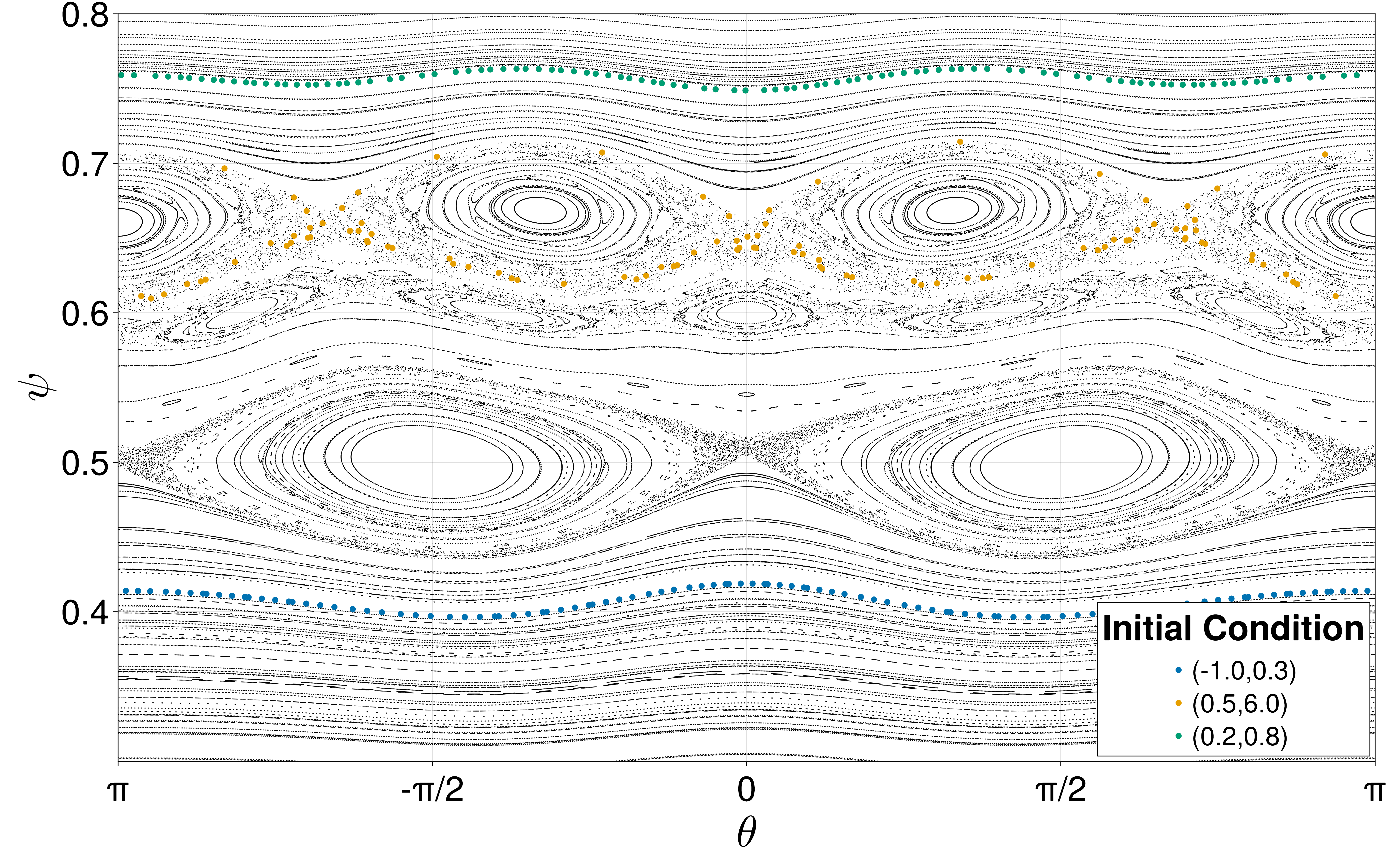}
    \caption{
        Example Poincar\'e section of a magnetic field in a slab. The feature of the field such as the islands can be controlled by applying a perturbation to the field line Hamiltonian that yields the system of ODEs \eqref{eq:Field line ODE}. The trajectories of three different field lines are shown. 
        }
    \label{fig:field line tracing}
\end{figure}

To understand the action of the parallel diffusion consider the sample field in Figure \ref{fig:field line tracing}. 
The solution is expected to flatten along field line transport barriers, such as those highlighted by the green and blue points. Any holes in the barriers which may let field lines cross will still allow the fast scale transport to `leak' through though, so the solution should be relatively constant along these barriers.
In the presence of islands and chaos, i.e. the region filled with yellow points, the solution is expected to flatten across the entire region due to excursions in the $\psi$ direction.
In the time dependant case the flattening in chaotic regions may take a long time, since it may require many transits for the field lines to distribute the solution.

In this paper we assume that field lines are confined (physically equivalent to assuming field lines do not strike a wall for instance), however the method can be adapted to handle non-confined field lines, by providing a suitable parallel mapping function if desired. A future work will examine the case where field lines are terminated ``early'' (without returning to the Poincar\`e section) by a wall or boundary.
Except for the ``NIMROD'' test \cite{sovinec_nonlinear_2004}, we also assume points leave the computational plane and return to it. In the ``NIMROD'' case, the problem is purely synthetic and designed to test the properties of the numerical parallel operator.

\begin{remark}
    In \cite{muir_numerical_2021} we showed that in the 2D case, that any set of points generated by a contractive map will be stable, and can be used to construct a parallel map.
\end{remark}

\subsubsection{Numerical parallel diffusion treatment}

In a hollow torus (in the present case this is a periodic box), the process of constructing the parallel operator is analogous to the discussion in the section above. Here, the grid points \eqref{eq:CartesianGrid} are used as initial conditions to the ODE \eqref{eq:Field line ODE} and traced backwards and forwards to the same plane. The forward and backward `grid' formed by the field line tracing will likely not correspond to a regular grid and so the values of $\bm{w}_f$ and $\bm{w}_b$ are determined by interpolation using the data on the original grid \eqref{eq:CartesianGrid}.

In the following discussion we consider the parallel penalty in terms of tracing field lines between $z$ cross sections of a 3D domain. 

The numerical tests presented in \S\ref{sec:Numerical Results} all use the \textit{Tsit5} integrator provided by the \textit{DifferentialEquations.jl}\footnote{\href{https://docs.sciml.ai/DiffEqDocs/stable/}{docs.sciml.ai/DiffEqDocs/stable/}} Julia package~\cite{rackauckas_differentialequationsjl_2017}. In practice we have found that setting a relative tolerance of $10^{-6}$ or less for the adaptive stepping ensures good convergence results.

\begin{remark}
    As mentioned, the forward and backward maps can in principle be generated by any function, as long as it is contractive.
\end{remark}

\begin{remark}
    One advantage of this method is that the magnetic field function can be defined in any way. One may also define the magnetic field in terms of arclength, ``toroidal length''.
\end{remark}

The action of the parallel diffusion operator on a Cartesian grid can be interpreted numerically as projection operators defined by the matrix products 
\begin{align}\label{eq:parallel_projection}
    \bm{P}_f = \bm{\Pi}_f \cdot \mathbb{P}_f, \quad  \bm{P}_b = \bm{\Pi}_b \cdot\mathbb{P}_b.
\end{align}
Here $\bm{\Pi}_f$, $\bm{\Pi}_b$ are stable interpolation matrices and $\mathbb{P}_f$,  $\mathbb{P}_b$ are permutation/distribution matrices. The permutation matrices are implicitly provided by following a point in the plane along the field line and recording its position as it lands back on the plane. If the point does not fall on a grid point, then the solution is computed by interpolation from neighbouring grid points. The particular interpolation scheme can be chosen arbitrarily.
In this work we will consider interpolation by stable cubic splines such that $\|\bm{\Pi}_f\|_{l_2}\leq1$, $\|\bm{\Pi}_b\|_{l_2}\leq1$, \cite{hong-ci_stability_1983}.

The following Lemma will be used to constrain the parallel numerical diffusion operator and will be useful in proving the stability of the numerical method.
\begin{lemma}\label{lem:bounding projection operator}
    Consider the forward $\bm{P}_f$ and backward $\bm{P}_b$ parallel diffusion projection operators given by \eqref{eq:parallel_projection} with $\bm{P}_f = \bm{\Pi}_f \cdot \mathbb{P}_f$, $\bm{P}_b = \bm{\Pi}_b \cdot \mathbb{P}_b$, where $\bm{\Pi}_f$, $\bm{\Pi}_b$ are stable interpolation matrices and $\mathbb{P}_f$,  $\mathbb{P}_b$ are permutation matrices. For all stable interpolation matrices with $\|\bm{\Pi}_f\|_{l_2}\leq1$ and $\|\bm{\Pi}_b\|_{l_2}\leq1$ then we have $\|\bm{P}_f\|_{l_2}\leq1$ and $\|\bm{P}_b\|_{l_2}\leq1$.
\end{lemma}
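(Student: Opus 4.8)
The plan is to exploit two facts: that the discrete $l_2$ norm is a uniformly scaled Euclidean norm, so the induced operator norm $\|\cdot\|_{l_2}$ coincides (up to a scaling that cancels) with the ordinary spectral norm and is therefore submultiplicative; and that a permutation matrix acts as an isometry in this norm. Granting these, the result is immediate from $\bm{P}_f = \bm{\Pi}_f \mathbb{P}_f$ via $\|\bm{P}_f\|_{l_2} \le \|\bm{\Pi}_f\|_{l_2}\,\|\mathbb{P}_f\|_{l_2}$, and identically for the backward operator.

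First I would record that, by \eqref{eq:discrete_l2_quad}, $\mathbf{I}_h = \Delta x\,\Delta y\,(I_{n_x}\otimes I_{n_y})$ is a positive scalar multiple of the identity, so that $\|\bm{u}\|_{l_2}^2 = \Delta x\,\Delta y\,\bm{u}^T\bm{u}$. Consequently the scalar factor $\sqrt{\Delta x\,\Delta y}$ cancels in the Rayleigh quotient defining $\|\mathbf{P}\|_{l_2}$ in \eqref{eq:operator_norm}, so $\|\mathbf{P}\|_{l_2}$ equals the Euclidean (spectral) operator norm of $\mathbf{P}$. This induced norm is submultiplicative, which supplies the product inequality used above.

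Next I would verify $\|\mathbb{P}_f\|_{l_2}=\|\mathbb{P}_b\|_{l_2}=1$. A permutation matrix only reorders the components of a vector, hence $\mathbb{P}_f^T\mathbb{P}_f = I$ and $\|\mathbb{P}_f\bm{u}\|_{l_2}^2 = \Delta x\,\Delta y\,(\mathbb{P}_f\bm{u})^T(\mathbb{P}_f\bm{u}) = \Delta x\,\Delta y\,\bm{u}^T\bm{u} = \|\bm{u}\|_{l_2}^2$ for every $\bm{u}$; taking the maximum over $\bm{u}\neq 0$ gives operator norm exactly $1$. Combining with the hypothesis $\|\bm{\Pi}_f\|_{l_2}\le 1$, submultiplicativity yields $\|\bm{P}_f\|_{l_2} = \|\bm{\Pi}_f\mathbb{P}_f\|_{l_2} \le \|\bm{\Pi}_f\|_{l_2}\,\|\mathbb{P}_f\|_{l_2} \le 1$, and the same argument with the backward matrices gives $\|\bm{P}_b\|_{l_2}\le 1$.

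The argument is short because the statement is deliberately posed in the uniform $l_2$ norm rather than the SBP-weighted $H$ norm; the only point deserving care is precisely this choice of norm. Under the non-uniform diagonal weights $\mathbf{H}=H_x\otimes H_y$ a permutation would in general fail to be an isometry, so the isometry step — and hence the clean submultiplicative bound — would break down. I would therefore flag explicitly that the result is asserted for $\|\cdot\|_{l_2}$, and that the norm-equivalence \eqref{eq:equivalence_l2_norm} is what later allows these bounds to be transferred (with constants $\alpha,\beta$) to the $H$ norm in which the stability proof ultimately operates.
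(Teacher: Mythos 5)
Your proposal is correct and follows essentially the same route as the paper's proof: bound $\|\bm{P}_f\|_{l_2}$ by submultiplicativity of the induced $l_2$ operator norm, using the fact that permutation matrices have unit norm. The only difference is that you explicitly justify the two facts the paper takes for granted (that the uniformly scaled $l_2$ norm inherits submultiplicativity from the spectral norm, and that permutations are isometries in it), which is a sound elaboration rather than a different argument.
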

\begin{proof}
Recall that for permutation matrices $\mathbb{P}_f$, $\mathbb{P}_b$ we have $\|\mathbb{P}_f\|_{l_2} = 1$, $\|\mathbb{P}_b\|_{l_2} = 1$.
    Therefore the parallel diffusion projection operators yield
    $$
\|\bm{P}_f\|_{l_2} = \|\bm{\Pi}_f\cdot \mathbb{P}_f\|_{l_2} \le \|\bm{\Pi}_f\|_{l_2}  \|\mathbb{P}_f\|_{l_2} = \|\bm{\Pi}_f\|_{l_2} \le 1,
    $$
    and
    $$
\|\bm{P}_b\|_{l_2} = \|\bm{\Pi}_b \cdot \mathbb{P}_b\|_{l_2} \le \|\bm{\Pi}_b\|_{l_2}  \|\mathbb{P}_b\|_{l_2} = \|\bm{\Pi}_b\|_{l_2} \le 1.
    $$
\end{proof}

The numerical procedure for imposing the parallel solution on the grid is performed weakly via a parallel penalty term on the entire computational plane. That is
$$
\bm{u} - \frac{1}{2}\left(\bm{P}_b\bm{u} + \bm{P}_f\bm{u}\right)=0 \iff \mathbf{P}_\parallel \bm{u} =0,
$$
where the numerical parallel diffusion operator is given by,
\begin{align}\label{eq:discrete parallel diffusion operator}
    \mathbf{P}_\parallel =-  \tau_\parallel \kappa_\parallel \bm{H}^{-1} \left(\bm{I} - \frac{1}{2}[\bm{P}_f + \bm{P}_b]\right).
\end{align}
Here $\bm{I}$ is the identity matrix and $\tau_\parallel>0$ is a scalar chosen to ensure stability. 
Note in particular, if the magnetic field lines are aligned with the grid, then $\bm{P}_f = \bm{P}_b = I$, and the parallel operator vanishes completely, i.e. $\mathbf{P}_\parallel=\bm{0}$.

We will now prove the theorem which shows that the numerical parallel  diffusion operator given in \eqref{eq:discrete parallel diffusion operator} will not contribute to energy growth.

\begin{theorem}\label{theo:parallel_operator}
    Consider the discrete form of the parallel diffusion operator given by \eqref{eq:discrete parallel diffusion operator}. If $\|\bm{P}_f\|_{l_2}\leq1$, $\|\bm{P}_b\|_{l_2}\leq1$, $\kappa_\parallel>0$ and $\tau_\parallel>0$, then,
    $$\bm{u}^T(\bm{H}\bm{P}_\parallel + (\bm{H}\bm{P}_\parallel)^T)\bm{u} \leq 0, \quad \forall \bm{u}\in\R^n.$$
\end{theorem}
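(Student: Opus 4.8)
The plan is to exploit the fact that the quadrature weight $\mathbf{I}_h = \Delta x\,\Delta y\,(I_{n_x}\otimes I_{n_y})$ from \eqref{eq:discrete_l2_quad} is a \emph{positive scalar multiple of the identity}, so that $\bm{a}^T\mathbf{I}_h\bm{b}$ is precisely the $l_2$ inner product whose norm is $\|\bm{u}\|_{l_2}^2 = \bm{u}^T\mathbf{I}_h\bm{u}$. First I would record the elementary identity $\bm{u}^T(\mathbf{M}+\mathbf{M}^T)\bm{u} = 2\,\bm{u}^T\mathbf{M}\bm{u}$, valid for any real square matrix $\mathbf{M}$ and real vector $\bm{u}$ because $\bm{u}^T\mathbf{M}^T\bm{u}$ is a scalar equal to its own transpose. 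Taking $\mathbf{M} = \mathbf{I}_h\mathbf{P}_\parallel$ reduces the claim to showing
$$
\bm{u}^T\mathbf{I}_h\mathbf{P}_\parallel\bm{u}\le 0, \quad \forall\bm{u}\in\R^{n_xn_y}.
$$

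Next I would substitute the definition \eqref{eq:Numerical Parallel Operator} and expand using $\|\bm{u}\|_{l_2}^2 = \bm{u}^T\mathbf{I}_h\bm{u}$, obtaining
$$
\bm{u}^T\mathbf{I}_h\mathbf{P}_\parallel\bm{u} = -\tau_\parallel\kappa_\parallel\left(\|\bm{u}\|_{l_2}^2 - \half\,\bm{u}^T\mathbf{I}_h(\bm{P}_f+\bm{P}_b)\bm{u}\right).
$$
Since $\tau_\parallel>0$ and $\kappa_\parallel>0$ by hypothesis, it then suffices to prove that the bracketed quantity is nonnegative, i.e. that $\bm{u}^T\mathbf{I}_h(\bm{P}_f+\bm{P}_b)\bm{u}\le 2\,\|\bm{u}\|_{l_2}^2$.

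The key estimate is Cauchy--Schwarz in the $l_2$ inner product combined with the hypotheses $\|\bm{P}_f\|_{l_2}\le 1$ and $\|\bm{P}_b\|_{l_2}\le 1$ (which hold by the preceding Lemma for stable interpolation). Writing $\bm{u}^T\mathbf{I}_h\bm{P}_f\bm{u}\le \|\bm{u}\|_{l_2}\,\|\bm{P}_f\bm{u}\|_{l_2}\le \|\bm{P}_f\|_{l_2}\,\|\bm{u}\|_{l_2}^2\le \|\bm{u}\|_{l_2}^2$, and likewise for $\bm{P}_b$, gives $\bm{u}^T\mathbf{I}_h(\bm{P}_f+\bm{P}_b)\bm{u}\le 2\|\bm{u}\|_{l_2}^2$, so the bracket is $\ge 0$ and hence $\bm{u}^T\mathbf{I}_h\mathbf{P}_\parallel\bm{u}\le 0$. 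Doubling via the symmetrization identity yields the stated inequality.

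The argument is essentially routine; the only point I would state carefully — and the main place a reader could stumble — is the passage from the operator-norm hypothesis to the quadratic-form bound. Here it is clean precisely because $\|\cdot\|_{l_2}$ is a scalar-weighted norm (the $\Delta x\,\Delta y$ factor is common to $\mathbf{I}_h$ and to the norm), so the weighted Cauchy--Schwarz applies directly and no detailed structure of the permutation/interpolation factors of $\bm{P}_f$ and $\bm{P}_b$ is needed beyond the assumed contractivity $\|\bm{P}_f\|_{l_2},\|\bm{P}_b\|_{l_2}\le 1$.
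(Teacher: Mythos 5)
Your proof is correct and takes essentially the same route as the paper's: both reduce the symmetrized quadratic form to the quadratic form of $\mathbf{I}_h\mathbf{P}_\parallel$, substitute the definition \eqref{eq:Numerical Parallel Operator}, and then apply Cauchy--Schwarz in the $\Delta x\,\Delta y$-weighted $l_2$ inner product together with the contractivity $\|\bm{P}_f\|_{l_2}\le 1$, $\|\bm{P}_b\|_{l_2}\le 1$ to show the bracketed quantity is nonnegative. The only cosmetic difference is that you make the factor-of-two symmetrization identity explicit at the outset, whereas the paper carries the symmetrized expression through the estimate directly.
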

\begin{proof}
    Consider $\bm{u}^T\bm{H}\bm{P}_\parallel\bm{u}$ and add the transpose of the product,
    \begin{align*}
        \bm{u}^T(\bm{H}\bm{P}_\parallel + (\bm{H}\bm{P}_\parallel)^T)\bm{u} 
            &= -\tau_\parallel \kappa_\parallel \bm{u}^T \left(2\bm{I} - \half[\bm{P}_f + \bm{P}_f^T] - \half[\bm{P}_b + \bm{P}_b^T] \right) \bm{u}.
    \end{align*}
    Let $\bm{A}_f=\half\left(\bm{P}_f + \bm{P}_f^T\right)$ and $\bm{A}_b=\half\left(\bm{P}_b + \bm{P}_b^T\right)$ such that,
    \begin{align*}
        \bm{u}^T\left(\bm{H}\bm{P}_\parallel + \left(\bm{H}\bm{P}_\parallel\right)^T\right)\bm{u} 
        &= -\tau_\parallel \kappa_\parallel \bm{u}^T\left(2\bm{I} - \bm{A}_f - \bm{A}_b\right)\bm{u}.
    \end{align*}
    Since $\|\bm{P}_b\|,\|\bm{P}_f\|\leq 1$ and $\bm{A}_f$ and $\bm{A}_b$ are symmetric indefinite matrices, it follows that,
    \begin{align*}
        -4\tau_\parallel\kappa_\parallel \|\bm{u}\|^2 \leq
        -\tau_\parallel \kappa_\parallel \bm{u}^T\left(2\bm{I} - \bm{A}_f - \bm{A}_b\right)\bm{u} \leq
        0.
    \end{align*}
    Hence, 
    \begin{align*}
        \bm{u}^T(\bm{H}\bm{P}_\parallel + (\bm{H}\bm{P}_\parallel)^T)\bm{u} \leq 0,
    \end{align*}
    which completes the proof.
\end{proof}

We will choose the nonlinear penalty
\begin{align}\label{eq:accurate_parallel_penalty}
  \tau_\parallel = \alpha\left(\frac{\|\bm{u}-\bm{w}\|_\infty}{\|\bm{u}\|_\infty}\right)^\beta > 0,  \quad \alpha>0,\, \beta\geq0,
\end{align}
which ensures stability and effectively minimises round off errors when $\bm{u}\sim \bm{w}$. Moreover, the inclusion of relative difference ensures that the parallel diffusion operator is enforced more strongly when the field is far from equilibrium.
Inclusion of the $\bm{H}^{-1}$ operator also ensures that the error generated by the parallel diffusion operator \eqref{eq:discrete parallel diffusion operator} scales as $O(\Delta x\Delta y)$ for all stable interpolation operators.
In practice we have found that $\alpha=0.1$ and $\beta=2$ provides good convergence results for values of $\kappa_\parallel$ up to $10^{10}$. 

It is significantly noteworthy that the accuracy of the numerical parallel diffusion operator is closely tied to the parallel penalty $\tau_\parallel$, the interpolation operators and the accuracy of  ODE solver for \eqref{eq:Field line ODE}. More interestingly, the accuracy of the numerical parallel diffusion operator is  independent of the SBP finite difference operators used for approximating the perpendicular diffusion operator.

\begin{remark}
  The construction of the parallel operator requires solving the  $2n_xn_y$ system of ODEs \eqref{eq:Field line ODE}. This can be expensive for time dependant fields, but can be computed once at the beginning of the simulation and saved when a time-independent magnetic field is given.
\end{remark}

\subsection{Semi-discrete approximation}\label{sec:sub:Semi-discrete approximation}
The semi-discrete approximation of the field aligned anisotropic diffusion equation \eqref{eq:ADE field aligned}, with initial condition \eqref{eq:ADE initial condition} and the boundary conditions  \eqref{eq:Boundary x Dirichlet} and \eqref{eq:Boundary y periodic} is
\begin{align}\label{eq:semi-discrete_ADE}
\frac{d\bm{u}}{dt} = \mathbf{P}_\perp \bm{u}  + \mathbf{P}_\parallel \bm{u} + \bm{F}(t), \quad \bm{u}(0) = \bm{f}.
\end{align}
Here $\mathbf{P}_\perp$ and $\mathbf{P}_\parallel$ are the perpendicular and parallel numerical diffusion operators given by \eqref{eq:Perpendicular Operator} and  \eqref{eq:discrete parallel diffusion operator}, respectively.
The following theorem proves that the semi-discrete approximation \eqref{eq:semi-discrete_ADE} is stable.
\begin{theorem}\label{theo:semi-discrete_stability}
    Consider the semi-discrete field aligned anisotropic diffusion equation \eqref{eq:semi-discrete_ADE}, where $\mathbf{P}_\perp$ and $\mathbf{P}_\parallel$ are the perpendicular and parallel numerical diffusion operators given by \eqref{eq:Perpendicular Operator} and  \eqref{eq:discrete parallel diffusion operator} with initial condition \eqref{eq:ADE initial condition} and with boundary conditions given by \eqref{eq:Boundary x Dirichlet} and \eqref{eq:Boundary y periodic}. For $\bm{g} =0$ and  $\bm{F} =0$, if  
    \begin{align*}
        \bm{u}^T\left((\mathbf{H}\mathbf{P}_\perp) + (\mathbf{H}\mathbf{P}_\perp)^T\right)\bm{u}\leq0,\quad \bm{u}^T\left((\mathbf{H}\mathbf{P}_\parallel) + (\mathbf{H}\mathbf{P}_\parallel)^T\right)\bm{u}\leq0,\quad \forall\bm{u}\in\R^{n_xn_y}.
    \end{align*}
    then we have
    \begin{align}
    {\|\bm{u}(t)\|_H \le \|\bm{f}\|_H}, \quad \forall t \ge 0,  \quad \|\bm{u}\|_H^2 = \bm{u}^T\mathbf{H}\bm{u} > 0 .
    \end{align}
\end{theorem}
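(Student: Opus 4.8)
The plan is to apply the energy method directly in the discrete $H$-norm, exactly mirroring the continuous argument of Theorem \ref{theo:well-posedness} with the $H$-inner product playing the role of the $L^2$ inner product. First I would observe that, since $\mathbf{H}$ is symmetric and independent of time, differentiating the squared norm and using that $\bm{u}^T\mathbf{H}\dot{\bm{u}}$ is a scalar with $\mathbf{H}=\mathbf{H}^T$ gives
\begin{align*}
\frac{d}{dt}\|\bm{u}\|_H^2 = \dot{\bm{u}}^T\mathbf{H}\bm{u} + \bm{u}^T\mathbf{H}\dot{\bm{u}} = 2\bm{u}^T\mathbf{H}\dot{\bm{u}}.
\end{align*}

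Next I would substitute the semi-discrete evolution equation \eqref{eq:semi-discrete_ADE} with the homogeneous data $\bm{F}=0$ and $\bm{g}=0$, so that $\dot{\bm{u}} = (\mathbf{P}_\perp + \mathbf{P}_\parallel)\bm{u}$, and then apply the elementary symmetrization identity $2\bm{u}^T A\bm{u} = \bm{u}^T(A + A^T)\bm{u}$, valid for any square matrix $A$, with $A=\mathbf{H}\mathbf{P}_\perp$ and $A=\mathbf{H}\mathbf{P}_\parallel$. This yields
\begin{align*}
\frac{d}{dt}\|\bm{u}\|_H^2 = \bm{u}^T\bigl(\mathbf{H}\mathbf{P}_\perp + (\mathbf{H}\mathbf{P}_\perp)^T\bigr)\bm{u} + \bm{u}^T\bigl(\mathbf{H}\mathbf{P}_\parallel + (\mathbf{H}\mathbf{P}_\parallel)^T\bigr)\bm{u}.
\end{align*}
The two semidefiniteness hypotheses stated in the theorem then bound each term above by zero, so that $\frac{d}{dt}\|\bm{u}\|_H^2 \le 0$. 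Integrating in time from $0$ to $t$ gives $\|\bm{u}(t)\|_H^2 \le \|\bm{u}(0)\|_H^2 = \|\bm{f}\|_H^2$, and taking square roots delivers the claimed estimate $\|\bm{u}(t)\|_H \le \|\bm{f}\|_H$ for all $t\ge 0$.

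I do not expect a serious obstacle here: the theorem is a clean application of the energy method, and essentially all the nontrivial work has already been discharged upstream. Specifically, Theorem \ref{thm:spsd_perpendicular_diffution_operator} establishes $\mathbf{P}_\perp = -\mathbf{H}^{-1}\mathbf{A}_\perp$ with $\mathbf{A}_\perp$ symmetric positive semidefinite, so that $\mathbf{H}\mathbf{P}_\perp=-\mathbf{A}_\perp$ is symmetric and the first bilinear form equals $-2\bm{u}^T\mathbf{A}_\perp\bm{u}\le 0$; Theorem \ref{theo:parallel_operator} together with \eqref{eq:equivalence_l2_parallel_operator} supplies the second. The only points requiring a little care are the constancy of $\mathbf{H}$ in time, which lets it pass outside the time derivative, and the symmetrization step, both of which are routine. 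It is worth emphasising that the $H$-norm estimate is the natural discrete analogue of the continuous bound \eqref{eq:continuous_energy_estimate}, with the discrete semidefiniteness inequalities replacing the integration-by-parts identities used in the continuous proof.
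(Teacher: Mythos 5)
Your proof is correct and follows essentially the same route as the paper: multiply the semi-discrete equation \eqref{eq:semi-discrete_ADE} by $\bm{u}^T\mathbf{H}$, add the transpose to obtain $\dv{t}\|\bm{u}\|_H^2$ as the sum of the two symmetrized quadratic forms, invoke the semidefiniteness hypotheses (discharged upstream by Theorems \ref{thm:spsd_perpendicular_diffution_operator} and \ref{theo:parallel_operator}), and integrate in time. Your write-up is in fact slightly more explicit than the paper's, spelling out the symmetrization identity and the time-independence of $\mathbf{H}$, but there is no substantive difference in approach.
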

\begin{proof}
    We set $\bm{F} =0$ and from the left multiply \eqref{eq:semi-discrete_ADE} with $\bm{u}^T\mathbf{H}$, adding the transpose of the product giving
    \begin{align*}
        \dv{t}\|\bm{u}\|_H^2  = \bm{u}^T\left((\mathbf{H}\mathbf{P}_\perp) + (\mathbf{H}\mathbf{P}_\perp)^T\right)\bm{u} +  \bm{u}^T\left((\mathbf{H}\mathbf{P}_\parallel) + (\mathbf{H}\mathbf{P}_\parallel)^T\right)\bm{u}\leq0.
    \end{align*}
    Which holds by combining theorems \eqref{thm:spsd_perpendicular_diffution_operator} and \eqref{theo:parallel_operator}. The proof is complete.
\end{proof}
Theorem \ref{theo:semi-discrete_stability} is completely analogous to the continuous counterpart Theorem \ref{theo:well-posedness}. We conclude this analysis by stating the stability of the semi-discrete approximation \eqref{eq:semi-discrete_ADE} is established by Theorems \ref{thm:spsd_perpendicular_diffution_operator}, \ref{theo:parallel_operator} and \ref{theo:semi-discrete_stability}.

\subsection{Fully-discrete approximation}\label{sec:sub:fully-discrete approximation}
We discretise the time variable $t \in [0, T]$ with the time-step $\Delta t_l>0$ so that $t_{l+1}=t_l + \Delta t_l$ where $t_0 =0$ and $l=0,1,2,\cdots$ and $T>0$ is the final time. The numerical solution at $t = t_l$ is denoted $\bm{u}^l$.

We will approximate \eqref{eq:semi-discrete_ADE} in time using a splitting technique.
Introducing an intermediate state $\bm{u}^{l+\half}$ which solves the perpendicular diffusion problem, we have
\begin{align}\label{eq:Fully Discrete stage 1}
    \left(\bm{I} - \theta\Delta t\bm{P}_\perp\right)\bm{u}^{l+\half} = \left( \bm{I} + (1-\theta)\Delta t\bm{P}_\perp \right)\bm{u}^l + \theta\Delta t\bm{F}^{l} + (1-\theta)\Delta t\bm{F}^{l+\half},
\end{align}
\begin{subequations} \label{eq:Fully Discrete stage 2}
\begin{align}\label{eq:Fully Discrete stage 2.1}
    \bm{w}_b^{l+\half} &= \bm{P}_b\bm{u}^{l+\half}, \quad  \bm{w}_f^{l+\half} = \bm{P}_f\bm{u}^{l+\half},\\
   \bm{u}^{l+1} &= \bm{u}^{l+\half} -
        \Delta t\tau_\parallel\kappa_\parallel \bm{H}^{-1} \left(\bm{u}^{l+1} - \half[\bm{w}_f^{l+\half} + \bm{w}_b^{l+\half}]\right).
   \label{eq:Fully Discrete stage 2.2}
\end{align}
\end{subequations}
For $\theta=0.5$ this amounts to using Crank-Nicholson for the perpendicular diffusion \eqref{eq:Fully Discrete stage 1} and implicit midpoint rule for the parallel diffusion \eqref{eq:Fully Discrete stage 2.2}.
The first stage \eqref{eq:Fully Discrete stage 1} propagates the perpendicular diffusion through the grid.  The corresponding linear algebraic problem \eqref{eq:Fully Discrete stage 1} can be solved efficiently using the conjugate gradient method, see \ref{sec:conjugate_gradient} for details.
The second stage \eqref{eq:Fully Discrete stage 2.1}-\eqref{eq:Fully Discrete stage 2.2} uses an implicit midpoint rule to propagate the parallel diffusion. This requires field line tracing \eqref{eq:Fully Discrete stage 2.1}, and uses the Cartesian grid points as initial conditions to the ODE \eqref{eq:Field line ODE}. The parallel diffusion solution is propagated through the grid by the parallel penalty term given by \eqref{eq:Fully Discrete stage 2.2}. The parallel solve \eqref{eq:Fully Discrete stage 2.2} involves a trivial linear algebraic problem.
In particular, we note that
\begin{align}\label{eq:Fully Discrete stage 2.2.1}
    \begin{split}
        \bm{u}^{l+1} = (\bm{I} + \Delta t\tau_\parallel\kappa_\parallel \bm{H}^{-1})^{-1} \left(\bm{u}^{l+\half} + \frac{\Delta t\tau_\parallel\kappa_\parallel}{2}\bm{H}^{-1}\bm{w}^{l+\half}\right).
    \end{split}
\end{align}

We will now prove that our splitting technique \eqref{eq:Fully Discrete stage 1}-\eqref{eq:Fully Discrete stage 2} is unconditionally stable.
\begin{theorem}\label{theo:fully-discrete_stability_split_form}
    Consider the fully-discrete field aligned anisotropic diffusion equation in split form \eqref{eq:Fully Discrete stage 1}-\eqref{eq:Fully Discrete stage 2}  with the initial condition $\bm{u}^{0} = \bm{f}$, where $\mathbf{P}_\perp$ and $\mathbf{P}_\parallel$ are the perpendicular and parallel numerical diffusion operators given by \eqref{eq:Perpendicular Operator} and \eqref{eq:discrete parallel diffusion operator}. For $\bm{g}=0$, $\bm{F}=0$ and $\half\le \theta\le 1$, if  
    \begin{align*}
        \bm{u}^T\left((\mathbf{H}\mathbf{P}_\perp) + (\mathbf{H}\mathbf{P}_\perp)^T\right)\bm{u}\leq0,\quad \|\bm{P}_f\|\leq1, \quad  \|\bm{P}_b\|\leq1,
    \end{align*}
    then we have
    \begin{align}
        \|\bm{u}^{l}\| \le \|\bm{f}\|, \quad \forall \, l \ge 0, \quad \forall \, \Delta t_l>0 .
    \end{align}
\end{theorem}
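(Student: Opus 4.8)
The plan is to exploit the sequential structure of the splitting: one full time step is the composition of the implicit perpendicular solve \eqref{eq:Fully Discrete stage 1} followed by the explicit averaged parallel update \eqref{eq:Fully Discrete stage 2.2.1}. It therefore suffices to show that each of the two substeps is non-expansive in a single fixed norm; the estimate $\|\bm{u}^{l}\| \le \|\bm{f}\|$ then follows immediately by induction on $l$. Since the dissipativity hypothesis on $\mathbf{P}_\perp$ is phrased in the SBP norm $\mathbf{H}$, I would carry the argument out in $\|\cdot\|_H$ and recover the stated $l_2$ bound at the end via the equivalences \eqref{eq:equivalence_l2_norm} and \eqref{eq:equivalence_l2_operator_norm}.

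For the perpendicular substep, rewrite \eqref{eq:Fully Discrete stage 1} (with $\bm{F}=0$) as $\bm{u}^{l+\half} - \Delta t\,\mathbf{P}_\perp \bm{u}^{l+\half} = \bm{u}^l$, multiply from the left by $(\bm{u}^{l+\half})^T\mathbf{H}$, and add the transpose of the resulting scalar identity. The cross term produces $-\Delta t\,(\bm{u}^{l+\half})^T\big((\mathbf{H}\mathbf{P}_\perp)+(\mathbf{H}\mathbf{P}_\perp)^T\big)\bm{u}^{l+\half} \ge 0$ by the hypothesis and $\Delta t>0$, so that $\|\bm{u}^{l+\half}\|_H^2 \le (\bm{u}^{l+\half})^T\mathbf{H}\bm{u}^l$. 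Applying the Cauchy--Schwarz inequality in the $\mathbf{H}$ inner product (exactly as in the proof of Theorem \ref{theo:fully-discrete_stability}) and dividing through gives $\|\bm{u}^{l+\half}\|_H \le \|\bm{u}^l\|_H$; the implicit perpendicular solve is thus unconditionally non-expansive.

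For the parallel substep I would use the closed form \eqref{eq:Fully Discrete stage 2.2.1}. Writing $a := \Delta t\,\tau_\parallel \kappa_\parallel > 0$ and applying the triangle inequality,
\[
\|\bm{u}^{l+1}\|_H \le (1+a)^{-1}\Big(\|\bm{u}^{l+\half}\|_H + \tfrac{a}{2}\big(\|\bm{P}_b\|_H + \|\bm{P}_f\|_H\big)\|\bm{u}^{l+\half}\|_H\Big).
\]
The hypotheses $\|\bm{P}_f\|_{l_2}\le 1$, $\|\bm{P}_b\|_{l_2}\le 1$ transfer to $\|\bm{P}_f\|_H\le 1$, $\|\bm{P}_b\|_H\le 1$ through the operator-norm equivalence \eqref{eq:equivalence_l2_operator_norm}, whence the bracket is at most $(1+a)\|\bm{u}^{l+\half}\|_H$ and the prefactor cancels it: $\|\bm{u}^{l+1}\|_H \le \|\bm{u}^{l+\half}\|_H$ for every $\Delta t_l>0$. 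Chaining the two substeps yields $\|\bm{u}^{l+1}\|_H \le \|\bm{u}^{l}\|_H$, and hence $\|\bm{u}^{l}\|_H \le \|\bm{f}\|_H$ by induction.

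The main obstacle is the mismatch of norms across the two substeps: the perpendicular dissipativity is an $\mathbf{H}$-weighted statement, whereas the parallel interpolation bounds are supplied in $l_2$, and Stage 1 is in general \emph{not} non-expansive in the bare $l_2$ norm (the symmetric part of $\mathbf{H}^{-1}\mathbf{A}_\perp$ in the Euclidean inner product need not be positive semi-definite). The clean way around this is precisely to run both estimates in the common SBP norm $\|\cdot\|_H$, using \eqref{eq:equivalence_l2_operator_norm} to import the parallel bounds, and only at the very end to pass to the stated $l_2$ estimate via \eqref{eq:equivalence_l2_norm}; alternatively one performs the parallel step directly in $l_2$, where it is manifestly non-expansive, and the perpendicular step in $\mathbf{H}$, which is the natural energy norm for the SBP discretisation.
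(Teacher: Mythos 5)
Your proposal is sound at the same level of rigour as the paper's own proof and follows the same master plan: show that each substep of the splitting is non-expansive and chain the two bounds by induction on $l$. Your Stage-1 estimate is essentially word-for-word the paper's (left-multiply \eqref{eq:Fully Discrete stage 1} by $\left(\bm{u}^{l+\half}\right)^T\mathbf{H}$, add the transpose, use the dissipativity of $\mathbf{P}_\perp$ and Cauchy--Schwarz). Where you genuinely differ is Stage 2 and the norm bookkeeping. The paper multiplies the parallel update by $\left(\bm{u}^{l+1}\right)^T\mathbf{I}_h$ and runs an inner-product/Cauchy--Schwarz argument in $l_2$ to get $\|\bm{u}^{l+1}\|_{l_2}\le\|\bm{u}^{l+\half}\|_{l_2}$; you instead apply the triangle inequality to the resolved form \eqref{eq:Fully Discrete stage 2.2.1}, $\bm{u}^{l+1}=\left(1+a\right)^{-1}\left(\mathbf{I}+\tfrac{a}{2}\left(\bm{P}_b+\bm{P}_f\right)\right)\bm{u}^{l+\half}$ with $a=\Delta{t}\tau_\parallel\kappa_\parallel>0$. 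Your route is cleaner: executed directly in $l_2$ (your own stated alternative), it uses the hypotheses $\|\bm{P}_f\|_{l_2}\le1$, $\|\bm{P}_b\|_{l_2}\le1$ exactly as given, needs no norm equivalence for this stage at all, and sidesteps the algebraic slips in the paper's corresponding display (a missing square and inconsistent factors of $2$ and $\Delta{t}\tau_\parallel\kappa_\parallel$).

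On the bookkeeping, the two proofs differ only in where the $\mathbf{H}$-to-$l_2$ conversion happens: the paper converts immediately after Stage 1 via \eqref{eq:equivalence_l2_norm} and then stays in $l_2$, while your primary routing stays in $\|\cdot\|_H$ throughout (importing the parallel bounds via \eqref{eq:equivalence_l2_operator_norm}) and converts once at the end; your alternative routing is exactly the paper's. Be aware that both conversions rest on the same delicate point, which you correctly single out as the main obstacle but do not actually resolve: a two-sided equivalence $\alpha\|\cdot\|_{H}\le\|\cdot\|_{l_2}\le\beta\|\cdot\|_{H}$ transfers boundedness, not non-expansiveness with constant exactly one. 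Since $\mathbf{H}$ and $\mathbf{I}_h$ are equivalent but not proportional (the quadrature weights differ at the boundary), passing from $\|\bm{u}^{l}\|_H\le\|\bm{f}\|_H$ to $\|\bm{u}^{l}\|_{l_2}\le\|\bm{f}\|_{l_2}$ in general costs a factor $\beta/\alpha>1$; the same objection applies to the claimed equivalence \eqref{eq:equivalence_l2_operator_norm} and to the paper's own conversion of its Stage-1 estimate. So your attempt is on par with the published argument --- identical Stage 1, cleaner Stage 2, and the same unaddressed norm-transfer step, which you at least flag explicitly while the paper invokes it silently.
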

\begin{proof}
    Multiplying equation \eqref{eq:Fully Discrete stage 1} with $\left(\bm{u}^{l+\half}\right)^T\bm{H}$ from the left and adding the transpose of the product gives,
    \begin{align*}
        &\left(\bm{u}^{l+\half}\right)^T \left(2\bm{H} - \theta\Delta t [(\bm{H}\bm{P}_\perp)^T + (\bm{H}\bm{P}_\perp)]\right)\bm{u}^{l+\half} = \\
        &\qquad\qquad\left( \bm{u}^{l+\half} \right)^T (2\bm{H} + (1-\theta)\Delta t[(\bm{H}\bm{P}_\perp)^T + (\bm{H}\bm{P}_\perp)])\bm{u}^l.
    \end{align*}
    Letting $\bm{G}_\perp=\bm{H} -\theta/2\left(\bm{H}\bm{P}_\perp +  (\bm{H}\bm{P}_\perp)^T\right) = \bm{H} + \theta\bm{A}_\perp$ with $\half\le \theta\le 1$ then,
    \begin{align*}
    2\|\bm{u}^{l+\half}\|_{\bm{G}_\perp}^2 &\leq 
        2\|\bm{u}^{l+\half}\|_{\bm{G}_\perp}\|\bm{u}^{l}\|_{\bm{G}_\perp}, 
    \end{align*}
    giving 
    $$
    \|\bm{u}^{l+\half}\|_{\bm{G}_\perp} \leq \|\bm{u}^l\|_{\bm{G}_\perp}
    \iff
    \|\bm{u}^{l+\half}\|_{\bm{H}} \leq \|\bm{u}^l\|_{\bm{H}}.
    $$
    
    Next consider equation \eqref{eq:Fully Discrete stage 2.2.1}, multiply by $(\bm{u}^{l+1})^T(\bm{H} + \Delta t \tau_\parallel\kappa_\parallel\bm{I})$ and add the transpose of the product, we have
    \begin{align*}
        2(\bm{u}^{l+1})^T(\bm{H} + \Delta t \tau_\parallel\kappa_\parallel\bm{I})\bm{u}^{l+1} = (\bm{u}^{l+1})^T\left(2\bm{H} + \Delta t\tau_\parallel\kappa_\parallel\half\left([\bm{P}_f + \bm{P}_b] + [\bm{P}_f + \bm{P}_b]^T\right)\right)\bm{u}^{l+\half}.
    \end{align*}
    Note that $\|\bm{I}\| \geq \|\half(\bm{P}_f + \bm{P}_b)\|$ follows from Lemma \ref{lem:bounding projection operator}. Hence,
    \begin{align*}
        2(\bm{H} + \Delta t \tau_\parallel\kappa_\parallel\bm{I}) \geq \left(2\bm{H} + \Delta t\tau_\parallel\kappa_\parallel\half\left([\bm{P}_f + \bm{P}_b] + [\bm{P}_f + \bm{P}_b]^T\right)\right).
    \end{align*}
    Therefore replacing the right hand side and noting it is symmetric positive definite, then setting $\bm{G}_\parallel=(\bm{H} + \Delta t \tau_\parallel\kappa_\parallel\bm{I})$ we have,
    \begin{align*}
      \|\bm{u}^{l+1}\|_{\bm{G}_\parallel}^2 \leq \|\bm{u}^{l+1}\|_{\bm{G}_\parallel}\|\bm{u}^{l+\half}\|_{\bm{G}_\parallel} &\implies \|\bm{u}^{l+1}\|_{\bm{G}_\parallel} \leq \|\bm{u}^{l+\half}\|_{\bm{G}_\parallel}\\
      &\iff \|\bm{u}^{l+1}\|_{\bm{H}} \leq \|\bm{u}^{l+\half}\|_{\bm{H}} \leq \|\bm{u}^l\|_{\bm{H}},
    \end{align*}
    and
    \begin{align*}
       \|\bm{u}^l\|_{\bm{H}} \le \|\bm{f}\|_{\bm{H}}, \quad \forall \, l\ge 0, \quad \forall \, \Delta t_l>0 .
    \end{align*}
    This completes the proof.
\end{proof}

Also note that, as stated in \S\ref{sec:sub:Parallel operator}, if $\bm{P}_\parallel=0$ $\bm{P}_b = \bm{P}_f = \bm{I}$, we have
\begin{align}
\bm{w}_b^{l+\half} = \bm{u}^{l+\half}, \quad  \bm{w}_f^{l+\half} = \bm{u}^{l+\half},
\end{align}
giving the expected solution $\bm{u}^{l+1}=\bm{u}^{l+\half}$, $\forall l\ge 0$, which is independent of the parallel diffusion.

\section{Numerical Results}\label{sec:Numerical Results}

In this section, we present some numerical results. Several numerical tests considered are designed to verify the accuracy and convergence properties of the different parts of our method, including the perpendicular and parallel numerical diffusion operators. We will use the method of manufactured solutions to test the perpendicular component exclusively with and without a magnetic field. This is followed by the ``NIMROD benchmark'' \cite{sovinec_nonlinear_2004} to show that the full scheme is asymptotic preserving. A self convergence test with a single island is also presented.
To end the section, we will present simulations involving magnetic field, with chaotic regions and islands, and show the contours of the numerical solutions of the anisotropic diffusion equation reproduce key features in the field.

\subsection{Perpendicular solve without magnetic field}\label{sec:sub:Code verification by method of manufactured solutions}
To verify the convergence properties of the perpendicular numerical diffusion operator we force the diffusion equation, with $\bm{P}_\parallel=0$, to have the exact solution,
\begin{align}\label{eq:Manufactured Solution}
    \widetilde{u}(x,y,t) = \cos(2\pi\omega_t t){\sin(2\pi\omega_x x+c_x)\sin(2\pi\omega_y y+c_y)},
\end{align}
where the values of the parameters $\omega_t$, $\omega_x$, $\omega_y$, $c_x$ and $c_y$  for the convergence tests are listed in Table \ref{tab:MMS parameters}.
The parameters for the spatial test have been chosen such that the errors due to spatial approximations dominate the temporal errors. Similarly, the parameters for the  temporal test are also chosen such that the errors due to temporal approximations dominate the spatial errors.
\begin{table}[H]
    \centering
    \caption{Parameters for convergence tests in Figure \ref{fig:MMS convergence}.}
    \begin{tabular}{cccccc}\hline
         Test       & $c_x$ & $c_y$ & $\omega_x$ & $\omega_y$ & $\omega_t$ \\\hline
         Spatial    & $1$   & $0$   & $7.5$ & $5$ & $1$ \\
         Temporal   & $1$   & $0$   & $1$   & $1$ & $9$ \\\hline
    \end{tabular}
    \label{tab:MMS parameters}
\end{table}

We consider the diffusion equation \eqref{eq:ADE field aligned} without the parallel operator, i.e. $\bm{P}_\parallel=0$. This is equivalent to having a magnetic field purely mesh aligned. The source term and the initial condition are given by,
\begin{align}
    F(x,y, t) = \pdv{\widetilde{u}}{t} - \nabla\cdot(\kappa_\perp\nabla\widetilde{u}), \quad u(x, y, 0) = \widetilde{u}(x,y,0).
\end{align}
We set the Dirichlet boundary condition \eqref{eq:Boundary x Dirichlet} in the $x$-direction, with the boundary data $g_L(y,t) = \widetilde{u}(x_L, y, t)$, $g_R(y,t) = \widetilde{u}(x_R, y, t)$, and the periodic boundary condition \eqref{eq:Boundary y periodic} in the $y$-direction.

Figure \ref{fig:MMS convergence} shows the convergence of $l_2$ errors using the implicit Euler ($\theta=1$) and Crank-Nicholson ($\theta=\half$) for the time solvers with $\Delta t = 0.1\Delta x$ in a $[0,1]\times[0,1]$ domain, and for SBP operators with 2nd and 4th order accurate interior stencils. The errors converge to zero at the expected rates, see Figure \ref{fig:MMS convergence}.

\begin{figure}[H]
    \begin{tikzpicture}
    \begin{groupplot}[group style={
            group size=2 by 3,
            x descriptions at=edge bottom,
            vertical sep    =5pt,
            horizontal sep  =5pt
        },
        my style,
        %
        enlargelimits=true,
        height=0.35\textwidth,
    ]
    \nextgroupplot[my legend style, 
        ymode=log,xmode=log,
        xtick={21,31,41,51,61,71,81,91,101},
        ylabel=log(relative error),
        grid]
        \addplot table[x index=0,y index=1,col sep=tab] {data_MMS_DP_spatial_Tests_theta10.csv};
        \addplot table[x index=0,y index=2,col sep=tab] {data_MMS_DP_spatial_Tests_theta10.csv};

        \node[] at (rel axis cs: 0.75,0.9) {\large $\theta=1$};

        \logLogSlopeTriangle{0.6}{0.4}{0.87}{2}{blue,dashed};
        \logLogSlopeTriangle{0.6}{0.4}{0.75}{3}{blue,dashed};
        
    \nextgroupplot[ymode=log,xmode=log,
        yticklabel pos=right,
        xtick={21,31,41,51,61,71,81,91,101},
        ylabel=Spatial error,
        grid]
        \addplot table[x index=0,y index=1,col sep=tab] {data_MMS_DP_spatial_Tests_theta05.csv};
        \addplot table[x index=0,y index=2,col sep=tab] {data_MMS_DP_spatial_Tests_theta05.csv};

        \node[] at (rel axis cs: 0.75,0.9) {\large $\theta=0.5$};

        \logLogSlopeTriangle{0.6}{0.4}{0.87}{2}{blue,dashed};
        \logLogSlopeTriangle{0.6}{0.4}{0.75}{3}{blue,dashed};

    \nextgroupplot[ymode=log,xmode=log,
            xtick={21,31,41,51,61,71,81,91,101},
            xticklabels={21,,41,,61,,,,101},
            xlabel=$n$,
            ylabel=log(relative error),
            grid]
            \addplot table[x index=0,y index=2,col sep=tab] {data_MMS_DP_temporal_Tests_theta10.csv};
            \addplot table[x index=0,y index=2,col sep=tab] {data_MMS_DP_temporal_Tests_theta10.csv};
            
            \node[] at (rel axis cs: 0.75,0.9) {\large $\theta=1$};

            \logLogSlopeTriangle{0.6}{0.4}{0.89}{1}{blue,dashed};
    \nextgroupplot[ymode=log,xmode=log,
            xtick={21,31,41,51,61,71,81,91,101},
            xticklabels={21,,41,,61,,,,101},
            yticklabel pos=right,
            ylabel=Temporal error,
            xlabel=$n$,
            grid]
            \addplot table[x index=0,y index=1,col sep=tab] {data_MMS_DP_temporal_Tests_theta05.csv};
            \addplot table[x index=0,y index=2,col sep=tab] {data_MMS_DP_temporal_Tests_theta05.csv};
            
            \node[] at (rel axis cs: 0.75,0.9) {\large $\theta=0.5$};

            \logLogSlopeTriangle{0.6}{0.4}{0.82}{2}{blue,dashed};

    \end{groupplot}

    \end{tikzpicture}
    \caption{
    Convergence rates of 2D code using method of manufactured solutions for the second order (blue) and fourth order (red) SBP operators. In each case $\Delta t= 0.1 \Delta x$, with $\Delta x=\Delta y$.
    Top: Spatial convergence. Bottom: Temporal convergence.
    Labels in the top right of each figure indicated the value of $\theta$ used.
    }
    \label{fig:MMS convergence}
\end{figure}
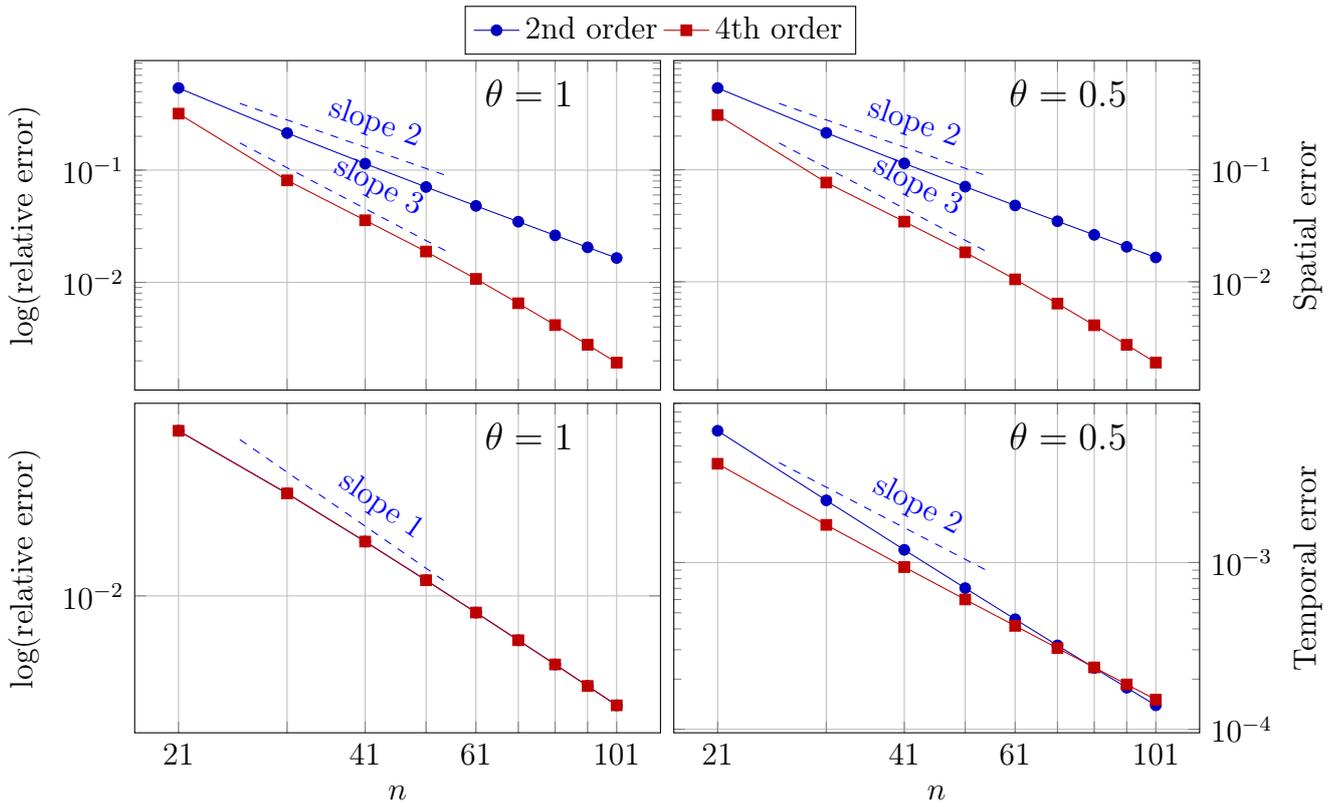

\subsection{NIMROD benchmark}\label{sec:sub:NIMROD benchmark}

\begin{figure}[H]
    \centering
    \includegraphics[width=0.8\textwidth]{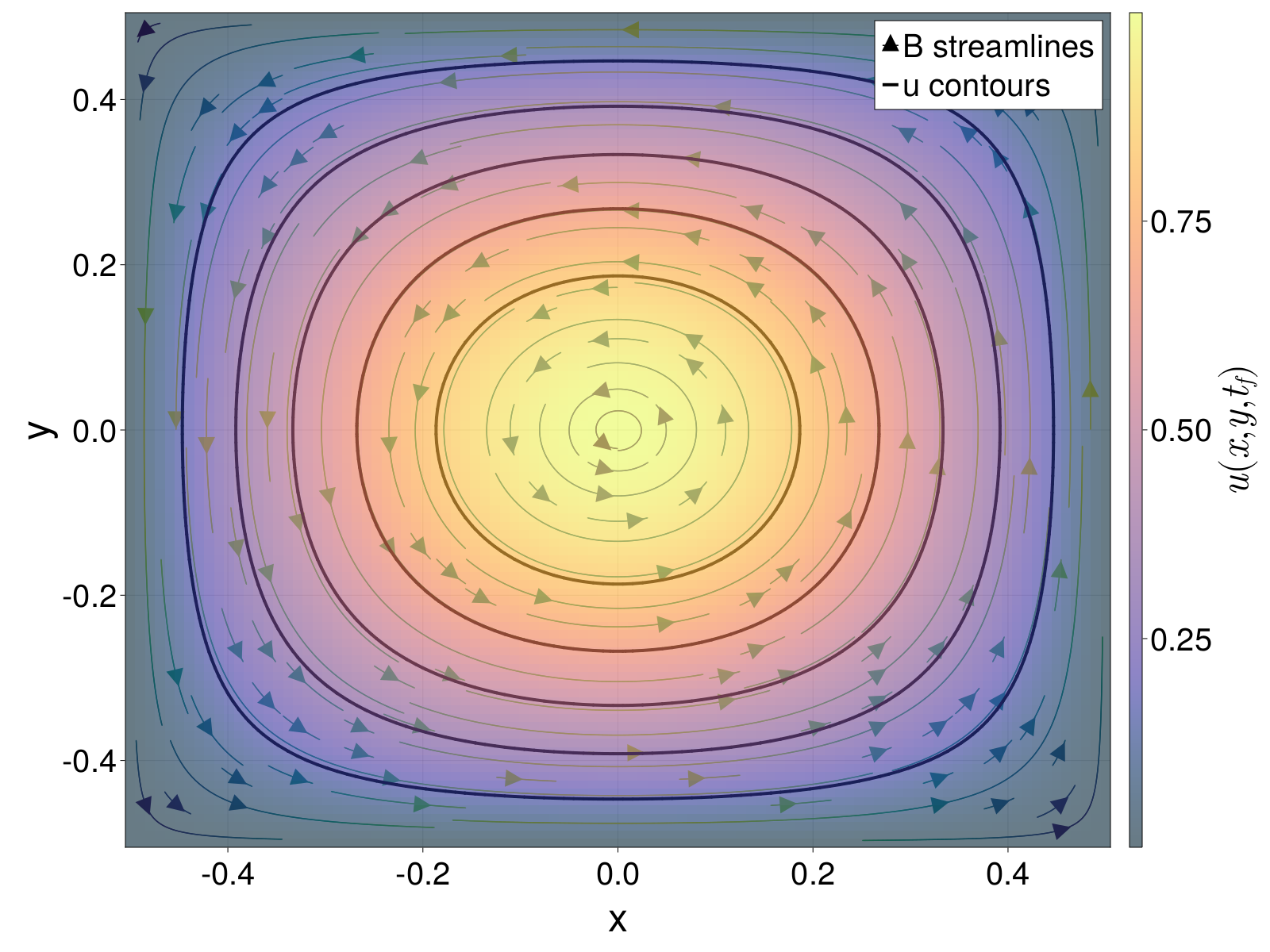}
    \caption{Exact solution of the ``NIMROD benchmark'' with $k_\perp=1$ and overlaid with contours and field lines. Field lines (arrows) match the contours of the solution (shown by the solid lines), which implies that regardless of where a field line is terminated along its trajectory it will contribute equally to the parallel diffusion.}
    \label{fig:NIMROD exact solution}
\end{figure}
As a test of the parallel penalty we use the ``NIMROD benchmark'' presented in~\cite{sovinec_nonlinear_2004}. The magnetic field is given by $\bm{B} = \bf{z}\times\nabla\psi$ where $\psi(x,y) = \cos(\pi x)\cos(\pi y)$. This gives a domain bounded by $(x,y)\in[-0.5,0.5]\cross[-0.5,0.5]$ in which all field lines are confined.
All boundary conditions for this problem are Dirichlet, $\left.u(x,y,t)\right|_{\partial\Omega}=0$.
The source term given by $F(x,y) = -\nabla^2\psi$ is also added to the right hand side in the same manner as the previous section, giving the analytic solution,
\begin{align}\label{eq:NIMROD exact}
    u(x,y,t) = \frac{1-\exp(-2t \kappa_\perp \pi^2)}{\kappa_\perp} \psi(x,y).
\end{align}
The exact solution for the case where $\kappa_\perp=1$ and magnetic field trajectories is shown in Figure \ref{fig:NIMROD exact solution}. 
Note that the choice of solution ensures that  the parallel operator is trivial and  does not contribute to diffusion since
$\bm{B}\cdot\nabla u =0$.

Since there is no toroidal ($\bm{z}$) component to the magnetic field for the NIMROD case, we re-parameterise the magnetic field $\bm{B}(\bm{x})$ to be in terms of the arclength $\bm{B}(s)$ where $s\in[0,1]$.
We note that at the origin, $\|\bm{B}\|=0$, and therefore the arclength is undefined.
This can be resolved by ensuring no grid point lies on the origin. However, we can also note that since the origin is a fixed point then the parallel map applied to this point is the identity, and it can be included in the grid with no difficulties.

\begin{remark}
    While the field lies purely in the plane, one could also consider any angle between the computational plane and the field and recover the same solution. This can be seen by considering the volumetric source term $F(x,y,z)=F(x,y)$ and any non-zero guide field $\bm{B}_0=\alpha \bm{z}$, $\alpha\in[0,1]$.
\end{remark}

Figure \ref{fig:NIMROD convergence rates} shows the spatial $l_2$ error and the  convergence rates of the error with increasing grid resolution   for a number of values of parallel diffusion coefficient as high as $\kappa_\parallel=10^{10}$. As a terminating condition for the tests we choose $t_f=0.1$ and the time step chosen to be $\Delta t= 0. 1\Delta x^2$ in order to minimise temporal errors. We show errors for both odd number of grid points, with a grid point at the origin, and even number of grid points, with no grid point at the origin, to investigate errors due to placing grid points at the origin.
Convergence rates for the second order case are around $\mathcal{O}(2)$ and $\mathcal{O}(3.5)$ for the fourth order cases. 
The  4th order method slightly outperforms the expected 3rd order global accuracy as per comments in \S\ref{sub:sec:sbp_operators}.

It is possible to achieve similar convergence rates for higher values of $\kappa_\parallel$, however this requires further tuning the penalty parameter to avoid the amplification of round off errors by the diffusion coefficient.

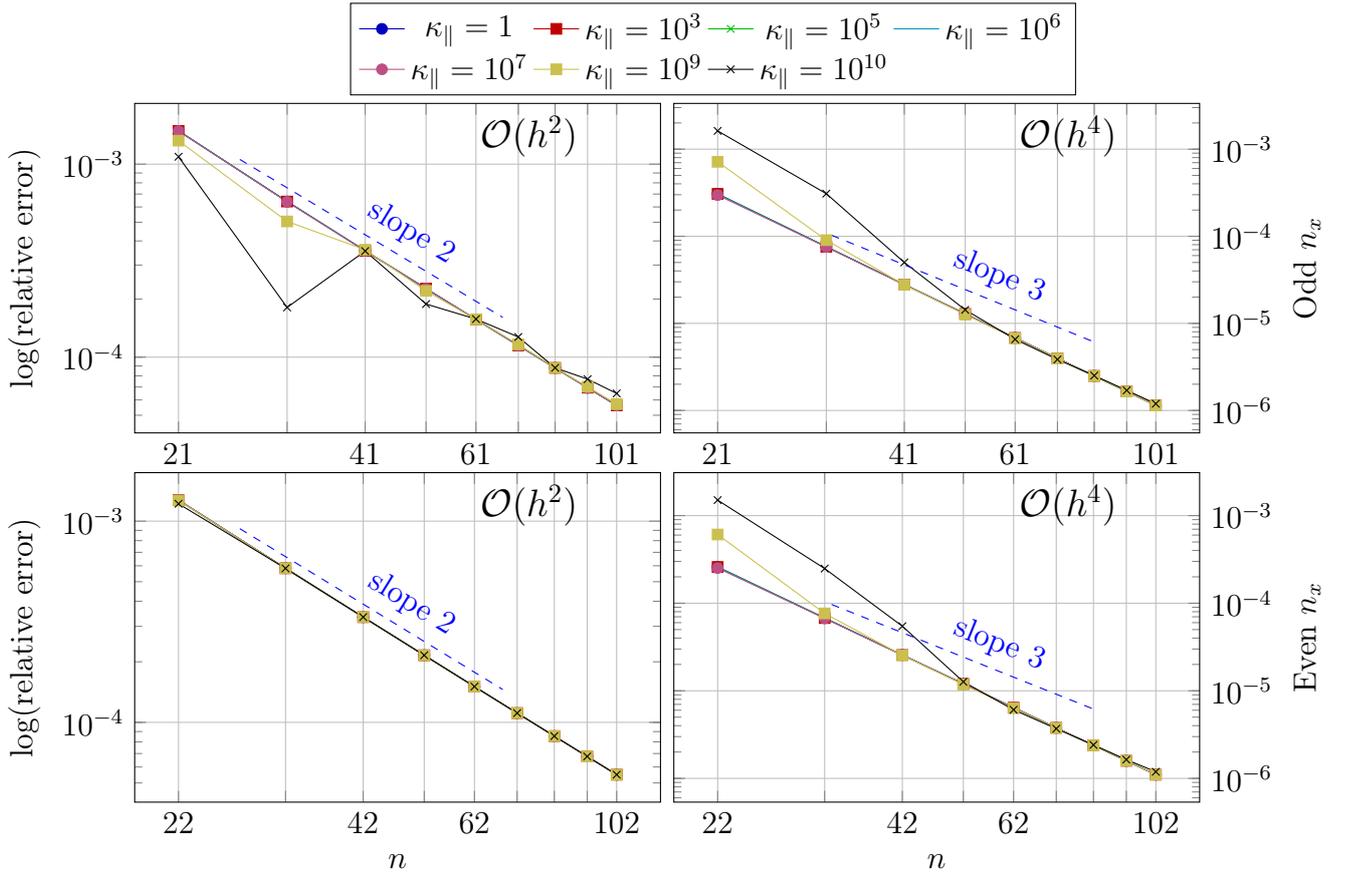
\begin{figure}[H]
    \centering
    \begin{tikzpicture}
    \begin{groupplot}[group style={
            group size=2 by 2,
            x descriptions at=edge bottom,
            vertical sep    =15pt,
            horizontal sep  =5pt
        },
        my style,
        enlargelimits=true,
        height=0.35\textwidth
    ]
    \nextgroupplot[
            NIMROD parallel legend style,
            xmode=log,ymode=log,
            xtick={21,31,41,51,61,71,81,91,101},
            xticklabels={21,,41,,61,,,,101},
            ylabel=log(relative error),
            grid]
        \addplot table[x index=0,y index=8,col sep=comma,skip first n=1]{data_NB_O2_theta0.5.csv};
        \addplot table[x index=0,y index=10,col sep=comma,skip first n=1]{data_NB_O2_theta0.5.csv};
        \addplot table[x index=0,y index=11,col sep=comma,skip first n=1]{data_NB_O2_theta0.5.csv};
        \addplot table[x index=0,y index=12,col sep=comma,skip first n=1]{data_NB_O2_theta0.5.csv};
        \addplot table[x index=0,y index=13,col sep=comma,skip first n=1]{data_NB_O2_theta0.5.csv};
        \addplot table[x index=0,y index=14,col sep=comma,skip first n=1]{data_NB_O2_theta0.5.csv};
        \addplot table[x index=0,y index=9,col sep=comma,skip first n=1]{data_NB_O2_theta0.5.csv};
        
        \logLogSlopeTriangle{0.7}{0.5}{0.83}{2}{blue,dashed};

        \node[] at (rel axis cs: 0.75,0.9) {\large $\mathcal{O}(h^2)$};

    \nextgroupplot[
        xmode=log,ymode=log,
            xtick={21,31,41,51,61,71,81,91,101},
            xticklabels={21,,41,,61,,,,101},
            ylabel=Odd $n_x$,
            yticklabel pos=right,
            grid]
        \addplot table[x index=0,y index=8,col sep=comma,skip first n=1]{data_NB_O4_theta0.5.csv};
        \addplot table[x index=0,y index=10,col sep=comma,skip first n=1]{data_NB_O4_theta0.5.csv};
        \addplot table[x index=0,y index=11,col sep=comma,skip first n=1]{data_NB_O4_theta0.5.csv};
        \addplot table[x index=0,y index=12,col sep=comma,skip first n=1]{data_NB_O4_theta0.5.csv};
        \addplot table[x index=0,y index=13,col sep=comma,skip first n=1]{data_NB_O4_theta0.5.csv};
        \addplot table[x index=0,y index=14,col sep=comma,skip first n=1]{data_NB_O4_theta0.5.csv};
        \addplot table[x index=0,y index=9,col sep=comma,skip first n=1]{data_NB_O4_theta0.5.csv};

        \logLogSlopeTriangle{0.8}{0.5}{0.6}{3}{blue,dashed};

        \node[] at (rel axis cs: 0.75,0.9) {\large $\mathcal{O}(h^4)$};

        \nextgroupplot[
            xmode=log,ymode=log,
            xtick={22,32,42,52,62,72,82,92,102},
            xticklabels={22,,42,,62,,,,102},
            ylabel=log(relative error),
            xlabel=$n$,
            grid]
        \addplot table[x index=0,y index=8,col sep=comma,skip first n=1]{data_NB_even_O2_theta0.5.csv};
        \addplot table[x index=0,y index=10,col sep=comma,skip first n=1]{data_NB_even_O2_theta0.5.csv};
        \addplot table[x index=0,y index=11,col sep=comma,skip first n=1]{data_NB_even_O2_theta0.5.csv};
        \addplot table[x index=0,y index=12,col sep=comma,skip first n=1]{data_NB_even_O2_theta0.5.csv};
        \addplot table[x index=0,y index=13,col sep=comma,skip first n=1]{data_NB_even_O2_theta0.5.csv};
        \addplot table[x index=0,y index=14,col sep=comma,skip first n=1]{data_NB_even_O2_theta0.5.csv};
        \addplot table[x index=0,y index=9,col sep=comma,skip first n=1]{data_NB_even_O2_theta0.5.csv};
        
        \logLogSlopeTriangle{0.7}{0.5}{0.83}{2}{blue,dashed};

        \node[] at (rel axis cs: 0.75,0.9) {\large $\mathcal{O}(h^2)$};

    \nextgroupplot[
        xmode=log,ymode=log,
            xtick={22,32,42,52,62,72,82,92,102},
            xticklabels={22,,42,,62,,,,102},
            ylabel=Even $n_x$,
            yticklabel pos=right,
            xlabel=$n$,
            grid]
        \addplot table[x index=0,y index=8,col sep=comma,skip first n=1]{data_NB_even_O4_theta0.5.csv};
        \addplot table[x index=0,y index=10,col sep=comma,skip first n=1]{data_NB_even_O4_theta0.5.csv};
        \addplot table[x index=0,y index=11,col sep=comma,skip first n=1]{data_NB_even_O4_theta0.5.csv};
        \addplot table[x index=0,y index=12,col sep=comma,skip first n=1]{data_NB_even_O4_theta0.5.csv};
        \addplot table[x index=0,y index=13,col sep=comma,skip first n=1]{data_NB_even_O4_theta0.5.csv};
        \addplot table[x index=0,y index=14,col sep=comma,skip first n=1]{data_NB_even_O4_theta0.5.csv};
        \addplot table[x index=0,y index=9,col sep=comma,skip first n=1]{data_NB_even_O4_theta0.5.csv};

        \logLogSlopeTriangle{0.8}{0.5}{0.6}{3}{blue,dashed};

        \node[] at (rel axis cs: 0.75,0.9) {\large $\mathcal{O}(h^4)$};

    \end{groupplot}
    \end{tikzpicture}
    
    \caption{Relative error from decreasing $\Delta x$ due to spatial discretisation. Time-step is scaled as $\Delta t=0.1\Delta x^2$. 
    Top: Odd number of grid points convergence.
    Bottom: Even number of grid points convergence.
    Left: Second order spatial discretisation with convergence rate of $\sim2$. 
    Right: Fourth order spatial discretisation with convergence rate of $\sim3.5$.}
    \label{fig:NIMROD convergence rates}
\end{figure}

Compared to previous finite difference results for the NIMROD case, \citeauthor{gunter_modelling_2005} achieved optimal convergence for their second and fourth order numerical symmetric schemes achieving errors on the order of $\sim10^{-4}$ for the fourth order scheme, at a resolution of $n\approx50$ for $\kappa_\parallel/\kappa_\perp=10^9$ \cite{gunter_modelling_2005}. \citeauthor{chacon_asymptotic-preserving_2014} obtains fourth order convergence for their fourth order numerical scheme. For the $\kappa_\parallel/\kappa_\perp=10^5$ case they achieve errors of $10^{-6}$ at $n=32$ and $10^{-8}$ for $n=128$ \cite{chacon_asymptotic-preserving_2014}.

Figure \ref{fig:NIMROD time error} shows the second order temporal convergence for varying $\kappa_\parallel$ values with $\theta=0.5$ and with fixed grid resolution and decreasing time step. Spatial resolution is fixed at $n_x=n_y=201$ to minimise spatial errors interfering with temporal convergence.

\begin{figure}[H]
    \centering
    \begin{tikzpicture}
    \begin{groupplot}[group style={
            group size=2 by 1,
            x descriptions at=edge bottom,
            horizontal sep  =5pt,
            vertical sep  =10pt,
        },
        my style,
        enlargelimits=true,
        height=0.35\textwidth
    ]
    \nextgroupplot[
            NIMROD parallel legend style,
            xmode=log,ymode=log,
            xtick=      {0.1,0.05,0.025,0.0125,0.00625,0.003125,0.0015625},
            xticklabels={0,1,2,3,4,5,6},
            xlabel=$0.1/2^i$,
            x dir=reverse,
            ylabel=log(relative error),
            grid]
        \addplot table[x index=0,y index=1,col sep=comma,skip first n=1]{data_NB_TL_n201_O2.csv};
        \addplot table[x index=0,y index=6,col sep=comma,skip first n=1]{data_NB_TL_n201_O2.csv};
        \addplot table[x index=0,y index=7,col sep=comma,skip first n=1]{data_NB_TL_n201_O2.csv};
        \addplot table[x index=0,y index=3,col sep=comma,skip first n=1]{data_NB_TL_n201_O2.csv};
        \addplot table[x index=0,y index=4,col sep=comma,skip first n=1]{data_NB_TL_n201_O2.csv};
        \addplot table[x index=0,y index=5,col sep=comma,skip first n=1]{data_NB_TL_n201_O2.csv};
        \addplot table[x index=0,y index=2,col sep=comma,skip first n=1]{data_NB_TL_n201_O2.csv};
        
        \logLogSlopeTriangle{0.7}{0.5}{0.83}{2}{blue,dashed};

        \node[] at (rel axis cs: 0.75,0.9) {\large $\mathcal{O}(h^2)$};

    \nextgroupplot[
        xmode=log,ymode=log,
            xtick=      {0.1,0.05,0.025,0.0125,0.00625,0.003125,0.0015625},
            xticklabels={0,1,2,3,4,5,6},
            xlabel=$0.1/2^i$,
            x dir=reverse,
            ylabel=Temporal error,
            yticklabel pos=right,
            grid]
        \addplot table[x index=0,y index=1,col sep=comma,skip first n=1]{data_NB_TL_n201_O4.csv};
        \addplot table[x index=0,y index=6,col sep=comma,skip first n=1]{data_NB_TL_n201_O4.csv};
        \addplot table[x index=0,y index=7,col sep=comma,skip first n=1]{data_NB_TL_n201_O4.csv};
        \addplot table[x index=0,y index=3,col sep=comma,skip first n=1]{data_NB_TL_n201_O4.csv};
        \addplot table[x index=0,y index=4,col sep=comma,skip first n=1]{data_NB_TL_n201_O4.csv};
        \addplot table[x index=0,y index=5,col sep=comma,skip first n=1]{data_NB_TL_n201_O4.csv};
        \addplot table[x index=0,y index=2,col sep=comma,skip first n=1]{data_NB_TL_n201_O4.csv};

        \logLogSlopeTriangle{0.7}{0.5}{0.83}{2}{blue,dashed};

        \node[] at (rel axis cs: 0.75,0.9) {\large $\mathcal{O}(h^4)$};

    \end{groupplot}
    \end{tikzpicture}
    
    \caption{
    Numerical error from decreasing $\Delta t$ as $0.1/2^i$ where $i\in\{0,1,\dots,6\}$ and the grid resolution is fixed at $201\times201$.
    Left: Second order spatial discretisation..
    Right: Fourth order spatial discretisation.}
    \label{fig:NIMROD time error}
\end{figure}
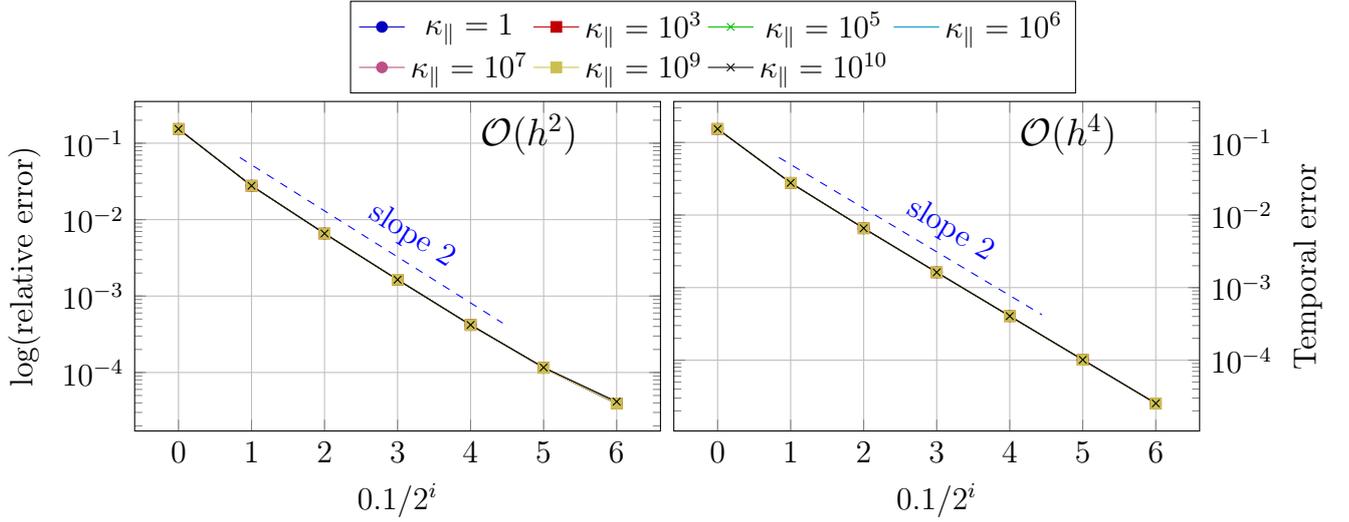

\subsubsection{Preservation of asymptotic behaviour}\label{sec:sub:Preservation of asymptotic behaviour}

In Figure \ref{fig:NIMROD pollution} we report the ``pollution'' measured as $\Delta\kappa = 1-u(0,0,t_f)$ in line with other work in this space. We see that the numerical pollution does not depend on the parallel diffusion parameter, $\kappa_\parallel$, demonstrating our method is asymptotic preserving.

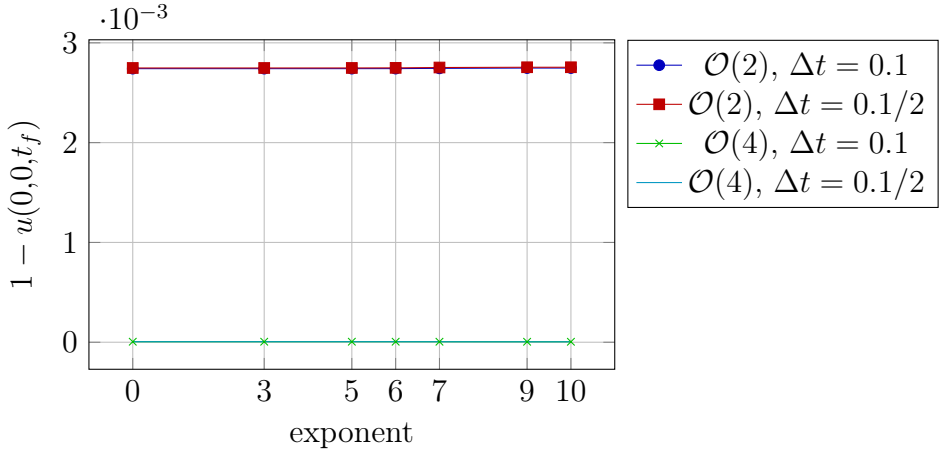
\begin{figure}[H]
    \centering
    \begin{tikzpicture}
    \begin{axis}[
            legend entries={
                $\mathcal{O}(2)\text{, }\Delta t = 0.1$,
                $\mathcal{O}(2)\text{, }\Delta t = 0.1/2$,
                $\mathcal{O}(4)\text{, }\Delta t = 0.1$,
                $\mathcal{O}(4)\text{, }\Delta t = 0.1/2$
            },
            legend style={
                at={([yshift=0pt]1.32,1.0)},
                anchor=north,
            },
            legend columns=1,
            xtick=      {0,3,5,6,7,9,10,12},
            xticklabels={0,3,5,6,7,9,10,12},
            xlabel=exponent,
            ylabel=$1-u(0\text{,}0\text{,}t_f)$,
            width=0.5\textwidth,
            height=0.35\textwidth,
            grid
            ]
        
        \addplot table[x index=0,y index=1,col sep=comma]{data_NB_AP_O2.csv};
        \addplot table[x index=0,y index=2,col sep=comma]{data_NB_AP_O2.csv};
        
        \addplot table[x index=0,y index=1,col sep=comma,skip first n=1]{data_NB_AP_O4.csv};
        \addplot table[x index=0,y index=2,col sep=comma,skip first n=1]{data_NB_AP_O4.csv};
    \end{axis}
    \end{tikzpicture}
    
    \caption{Numerical pollution, $1-u(0,0,1)$, from increasing $\kappa_\parallel$ with fixed $\Delta t=0.1/2^i$ where $i\in\{0,1\}$ and fixed order..}
    \label{fig:NIMROD pollution}
\end{figure}

\subsubsection{Investigation of parallel penalty}\label{sec:sub:NIMROD investigation of parallel penalty}

We investigate the properties of the parallel penalty in the NIMROD case. Since we expect the parallel penalty to do nothing, the penalty parameter should act to suppress the parallel operator. Figure \ref{fig:penalty during NIMROD test} shows the effective parallel penalty parameter,
\begin{align}\label{eq:effective parallel penalty}
    \tau_{\text{eff}} = \tau_\parallel \kappa_\parallel \max(\bm{H}^{-1}),
\end{align}
over the course of each simulation.
The figure demonstrates the parallel penalty is suppressed by the non-linear $\tau_\parallel$, as expected when the interpolated solution is close to the perpendicular solve. 

The slight growth in the parameter can be explained by the fact that $\bm{u}-\bm{w}$ is very close at the beginning, since the solution is close to zero. Since $u(x,y,t)$ tends to equilibrium as $t\to\infty$, the penalty parameter stabilises with $\bm{u}-\bm{w}$.

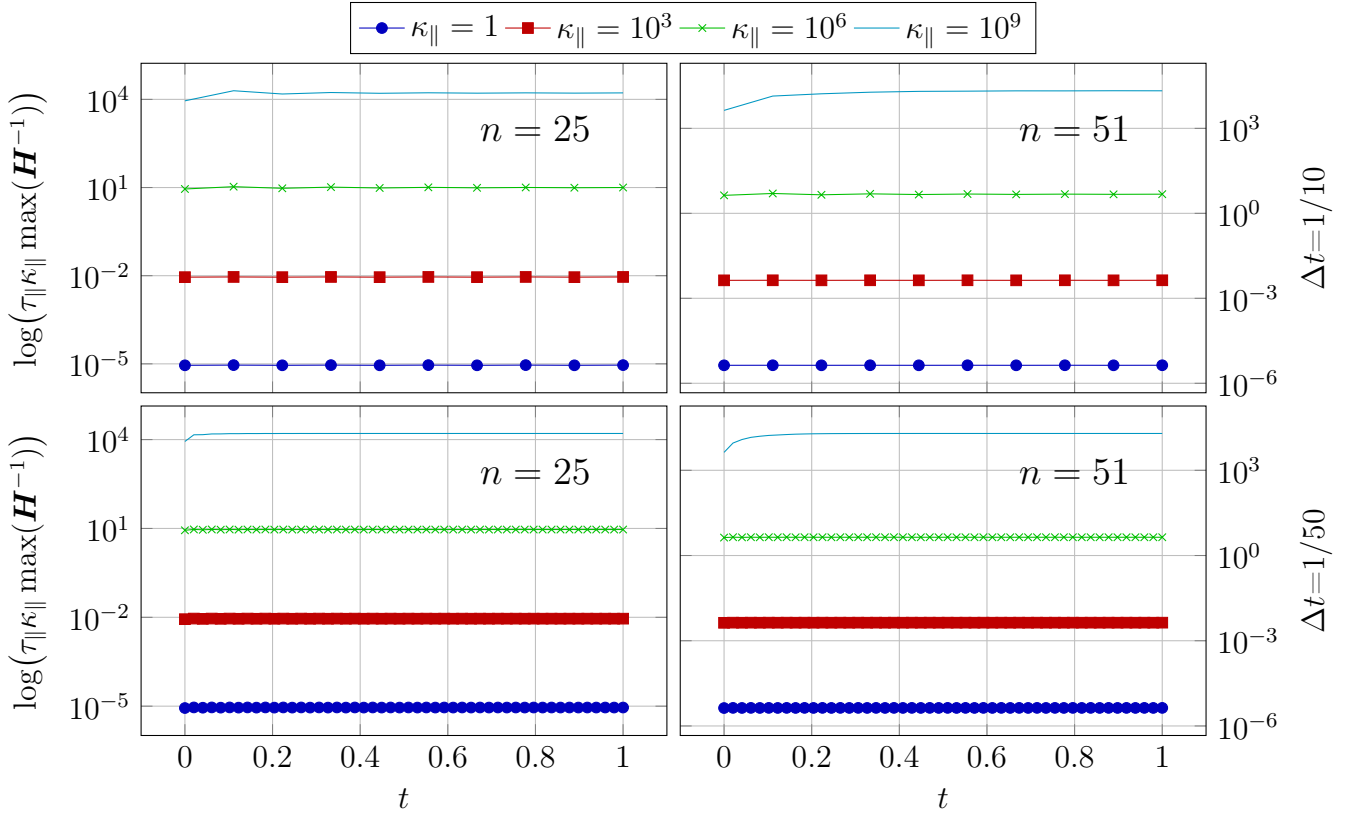
\begin{figure}[H]
    \centering
    \begin{tikzpicture}
    \begin{groupplot}[group style={
            group size=2 by 2,
            x descriptions at=edge bottom,
            horizontal sep  =5pt,
            vertical sep  =5pt,
        },
        my style,
        enlargelimits=true,
        height=0.35\textwidth
    ]
    \nextgroupplot[
            legend entries={
                $\kappa_\parallel=1$,
                $\kappa_\parallel=10^3$,
                $\kappa_\parallel=10^6$,
                $\kappa_\parallel=10^9$,
                $\kappa_\parallel=10^{12}$
            },
            legend style={
                at={([yshift=2pt]1.05,1.17)},
                anchor=north,
            },
            legend columns=5,
            ymode=log,
            ylabel=$\log(\tau_\parallel \kappa_\parallel \max(\bm{H}^{-1}))$,
            grid]
        \addplot table[x index=0,y index=1,col sep=comma]{data_NB_tau_dt0.1.csv};
        \addplot table[x index=0,y index=3,col sep=comma]{data_NB_tau_dt0.1.csv};
        \addplot table[x index=0,y index=5,col sep=comma]{data_NB_tau_dt0.1.csv};
        \addplot table[x index=0,y index=7,col sep=comma]{data_NB_tau_dt0.1.csv};
        
        \node[] at (rel axis cs: 0.75,0.8) {\large $n=25$};

    \nextgroupplot[
            ymode=log,
            ylabel=$\Delta t\text{=}1/10$,
            yticklabel pos=right,
            grid]
        \addplot table[x index=0,y index=2,col sep=comma]{data_NB_tau_dt0.1.csv};
        \addplot table[x index=0,y index=4,col sep=comma]{data_NB_tau_dt0.1.csv};
        \addplot table[x index=0,y index=6,col sep=comma]{data_NB_tau_dt0.1.csv};
        \addplot table[x index=0,y index=8,col sep=comma]{data_NB_tau_dt0.1.csv};

        \node[] at (rel axis cs: 0.75,0.8) {\large $n=51$};

    \nextgroupplot[
            ymode=log,
            xlabel=$t$,
            ylabel=$\log(\tau_\parallel \kappa_\parallel \max(\bm{H}^{-1}))$,
            grid]
        \addplot table[x index=0,y index=1,col sep=comma]{data_NB_tau_dt0.02.csv};
        \addplot table[x index=0,y index=3,col sep=comma]{data_NB_tau_dt0.02.csv};
        \addplot table[x index=0,y index=5,col sep=comma]{data_NB_tau_dt0.02.csv};
        \addplot table[x index=0,y index=7,col sep=comma]{data_NB_tau_dt0.02.csv};
        
        \node[] at (rel axis cs: 0.75,0.8) {\large $n=25$};

    \nextgroupplot[
            ymode=log,
            xlabel=$t$,
            ylabel=$\Delta t\text{=}1/50$,
            yticklabel pos=right,
            grid]
        \addplot table[x index=0,y index=2,col sep=comma]{data_NB_tau_dt0.02.csv};
        \addplot table[x index=0,y index=4,col sep=comma]{data_NB_tau_dt0.02.csv};
        \addplot table[x index=0,y index=6,col sep=comma]{data_NB_tau_dt0.02.csv};
        \addplot table[x index=0,y index=8,col sep=comma]{data_NB_tau_dt0.02.csv};

        \node[] at (rel axis cs: 0.75,0.8) {\large $n=51$};

    \end{groupplot}
    \end{tikzpicture}
    
    \caption{Value of the effective penalty parameter \eqref{eq:effective parallel penalty} during select NIMROD test using second order spatial operator. 
    Left top: grid resolution of $n_x=n_y=25$, $\Delta t=0.1$. Left right: $n_x=n_y=51$, $\Delta y=0.1$.
    Left bottom: grid resolution of $n_x=n_y=25$, $\Delta t=0.02$. Left right: $n_x=n_y=51$, $\Delta y=0.02$.
    }
    \label{fig:penalty during NIMROD test}
\end{figure}

\subsection{Single island self convergence test}

We perform a number of tests on a case in a periodic box given by $(x,y)\in[0,1]\times[0,2\pi]$ with a single magnetic island.
The flux function and magnetic field are given by,
\begin{align}
    \psi(x,y) = (x-x_1)^2 + \delta x (1-x) \cos(y), \qquad \bm{B} = \bm{z}\cross\nabla\psi + \bm{z},
\end{align}
where $\delta = 0.05$ and  $x_1=0.5$. This has a single island centred at $x_1$.
The boundary conditions in $x$ are Dirichlet, $u(0,y,t)=0$ and $u(1,y,t)=1$, and periodic in $y$. The initial condition is chosen as a ramp $u(x,y,0)=x$.
The expected behaviour of the solution is that the temperature will tend towards uniformity across the island as $\kappa_\parallel$ increases.

Simulations are run until $t_f=10^{-2}$ with a time-step of $\Delta t=10^{-4}$, chosen to ensure that we sufficiently resolve the fast diffusion and maintain temporal accuracy. The reference solution is computed on an $n_x=n_y=801$ grid using a fourth order SBP operator in the perpendicular direction.

The error is greatest near the island, so to improve accuracy grid points are packed in this region by the scaling function,
\begin{align}
    x(s,n_x) = \frac{\sinh(a \times s (n_x/51)^{b})}{2\sinh(a(n_x/51)^{b})} + 0.5, \qquad s\in[0,1],
\end{align}
where $a=0.15$ and $b=1.3$ and the $51$ corresponds to the lowest grid resolution and $n_x$ is the number of grid points at a given resolution. This parameterisation steepens the slope around the island as the grid resolution is increased.

Figure \ref{fig:single island convergence} shows the convergence of $l_2$ errors for the 2nd and 4th order spatial discretisations using both the grid packing (top) and a standard cartesian grid (bottom). The large errors shown in the cartesian case demonstrate why grid packing near the separatrix is necessary.

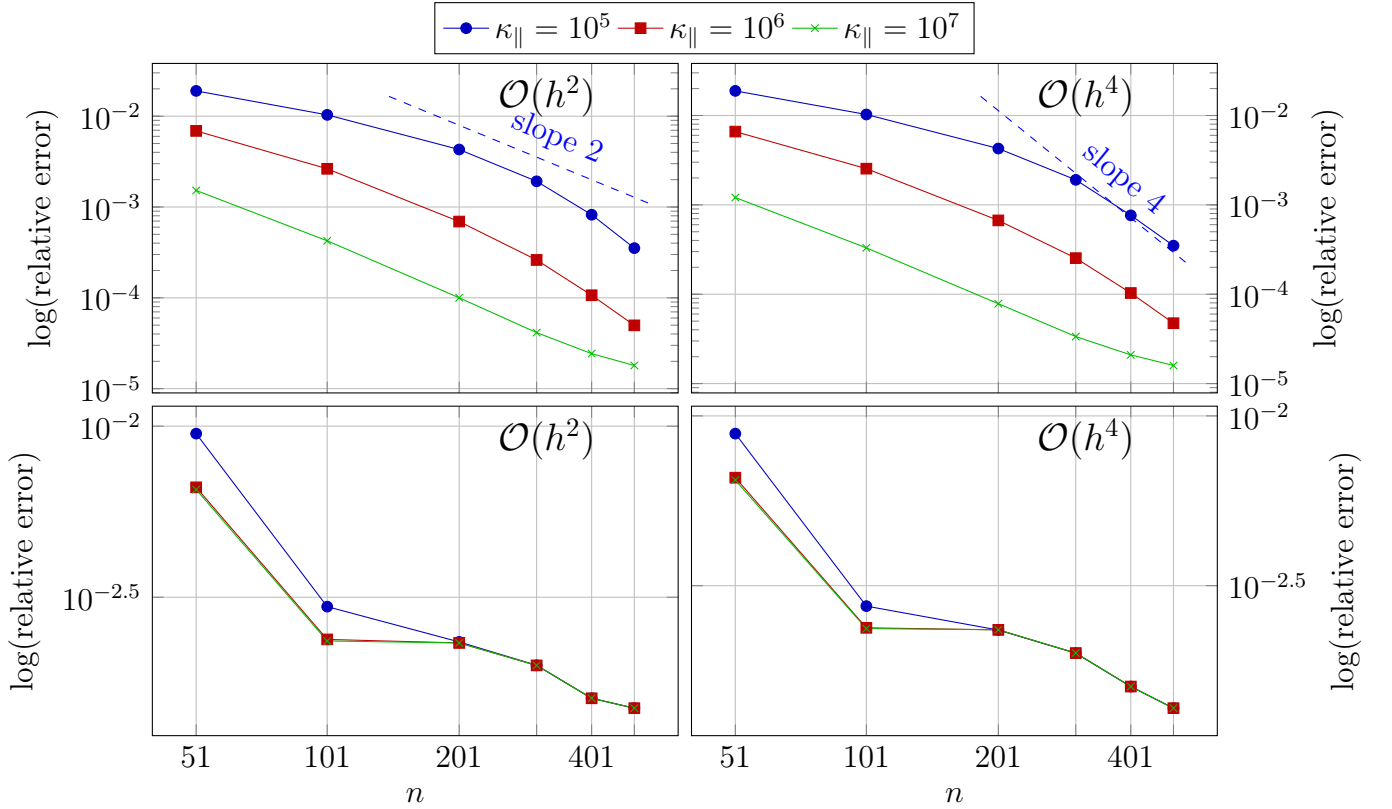
\begin{figure}[tbh]
    \centering
    \begin{tikzpicture}
        \begin{groupplot}[group style={
            group size=2 by 2,
            x descriptions at=edge bottom,
            horizontal sep  =5pt,
            vertical sep  =5pt,
        },
        my style,
        enlargelimits=true,
        height=0.35\textwidth,
        ylabel=log(relative error),
        grid
    ]

    \nextgroupplot[
        legend entries={
                $\kappa_\parallel=10^5$,
                $\kappa_\parallel=10^6$,
                $\kappa_\parallel=10^7$,
            },
        legend style={
                at={([yshift=2pt]1.05,1.17)},
                anchor=north,
            },
        legend columns=3,
            xmode=log,ymode=log,
            xtick={51,101,201,301,401,501},
            ]
        \addplot table[x index=0,y index=1,col sep=comma,skip first n=1]{data_SingleIslandField_stretch_O2_k1.0e5_delta0.05_dt4.csv};
        \addplot table[x index=0,y index=1,col sep=comma,skip first n=1]{data_SingleIslandField_stretch_O2_k1.0e6_delta0.05_dt4.csv};
        \addplot table[x index=0,y index=1,col sep=comma,skip first n=1]{data_SingleIslandField_stretch_O2_k1.0e7_delta0.05_dt4.csv};

        \logLogSlopeTriangle{0.95}{0.5}{0.9}{2}{blue,dashed};
        \node[] at (rel axis cs: 0.75,0.9) {\large $\mathcal{O}(h^2)$};

    \nextgroupplot[xmode=log,ymode=log,
            yticklabel pos=right,
            xtick={51,101,201,301,401,501},
            ]

        \addplot table[x index=0,y index=1,col sep=comma,skip first n=1]{data_SingleIslandField_stretch_O4_k1.0e5_delta0.05_dt4.csv};
        \addplot table[x index=0,y index=1,col sep=comma,skip first n=1]{data_SingleIslandField_stretch_O4_k1.0e6_delta0.05_dt4.csv};
        \addplot table[x index=0,y index=1,col sep=comma,skip first n=1]{data_SingleIslandField_stretch_O4_k1.0e7_delta0.05_dt4.csv};
        
        \logLogSlopeTriangle{0.95}{0.4}{0.9}{4}{blue,dashed};
        \node[] at (rel axis cs: 0.75,0.9) {\large $\mathcal{O}(h^4)$};

    \nextgroupplot[
            xmode=log,ymode=log,
            xtick={51,101,201,301,401,501},
            xticklabels={51,101,201,,401,}
            ]
        \addplot table[x index=0,y index=1,col sep=comma,skip first n=1]{data_SingleIslandField_cart_O2_k1.0e5_delta0.05_dt4.csv};
        \addplot table[x index=0,y index=1,col sep=comma,skip first n=1]{data_SingleIslandField_cart_O2_k1.0e6_delta0.05_dt4.csv};
        \addplot table[x index=0,y index=1,col sep=comma,skip first n=1]{data_SingleIslandField_cart_O2_k1.0e7_delta0.05_dt4.csv};

        \node[] at (rel axis cs: 0.75,0.9) {\large $\mathcal{O}(h^2)$};

    \nextgroupplot[xmode=log,ymode=log,
            yticklabel pos=right,
            xtick={51,101,201,301,401,501},
            xticklabels={51,101,201,,401,}
            ]

        \addplot table[x index=0,y index=1,col sep=comma,skip first n=1]{data_SingleIslandField_cart_O4_k1.0e5_delta0.05_dt4.csv};
        \addplot table[x index=0,y index=1,col sep=comma,skip first n=1]{data_SingleIslandField_cart_O4_k1.0e6_delta0.05_dt4.csv};
        \addplot table[x index=0,y index=1,col sep=comma,skip first n=1]{data_SingleIslandField_cart_O4_k1.0e7_delta0.05_dt4.csv};

        \node[] at (rel axis cs: 0.75,0.9) {\large $\mathcal{O}(h^4)$};

    \end{groupplot}\end{tikzpicture}
    \caption{Single island self convergence plot. Reference simulation has a resolution of $n_x=n_y=801$ all simulations have a timestep of $10^{-4}$.
    Left: Second order convergence.
    Right: Fourth order convergence.
    Top: With grid packing.
    Bottom: Without grid packing.
    }
    \label{fig:single island convergence}
\end{figure}
For the grid packing tests, we observe exponential convergence for all three test values of $\kappa_\parallel$.
Figure \ref{fig:single island surface plot of error} shows how the error changes for the $n_x=n_y=201$, $\kappa_\parallel=10^6$ simulation, demonstrating the large error about the separatrix.
\begin{figure}[H]
    \centering
    \includegraphics[width=\columnwidth]{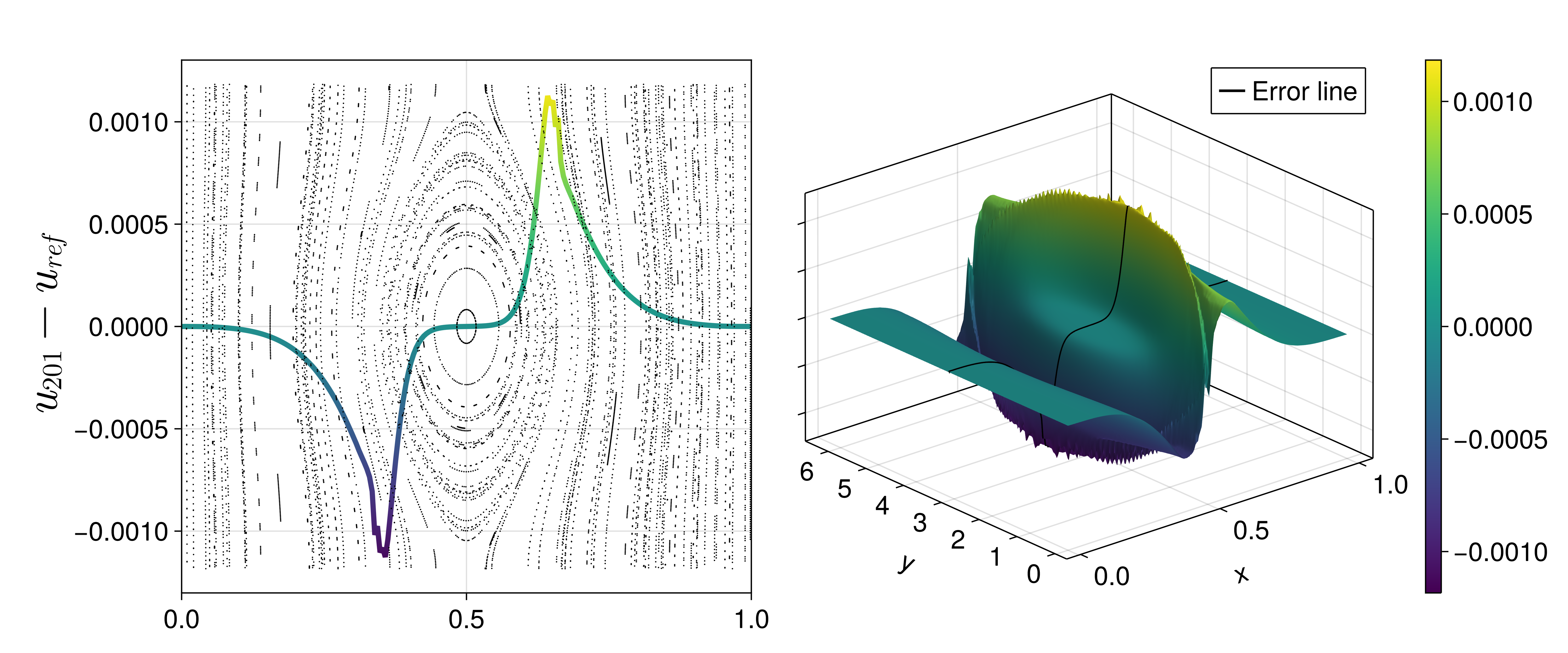}
    \caption{Error for the $n_x=n_y=201$ simulation for $\kappa_\parallel=10^6$.
    Left: The error as a function of $x$ position at $y=\pi$, corresponding to the black line in the right figure and with a Poincar\`e section showing the position of the error with respect to the field.
    Right: Error across the entire domain.}
    \label{fig:single island surface plot of error}
\end{figure}

The large errors and requirement for grid packing can be understood by looking at the error field in Figure \ref{fig:single island surface plot of error}. The solution inside and outside the island are not compatible with one another. Inside the island, the solution is constant, while outside the island the solution can be written as a linear function. Therefore for sufficiently high $\kappa_\parallel$ the solution becomes ``almost discontinuous'' and can only be properly resolved if grid points lie across the discontinuity, i.e. the separatrix.

While not being the same test (the field is different and a source term is added), the paper by \citeauthor{chacon_asymptotic-preserving_2014} also shows that convergence rates can be degraded by the presence of a separatrix \cite{chacon_asymptotic-preserving_2014}. The authors refer to work by \citeauthor{fitzpatrick_helical_1995}, which indicates that a boundary layer forms at the separatrix which is difficult to resolve with a standard grid \cite{fitzpatrick_helical_1995}.

As with the NIMROD case we show the behaviour of the effective parallel penalty, \eqref{eq:effective parallel penalty}, in Figure \ref{fig:penalty during SI test}.
We note the behaviour is significantly different to the NIMROD case, but not unexpected. In this instance the parallel penalty very rapidly enforces the difference $\bm{u} - \bm{w}$ and then relaxes as an equilibrium is achieved.
The flattening behaviour at longer time can be expected for problems with time independent Dirichlet boundary conditions and no source terms, since the solution relaxes to a steady state solution.

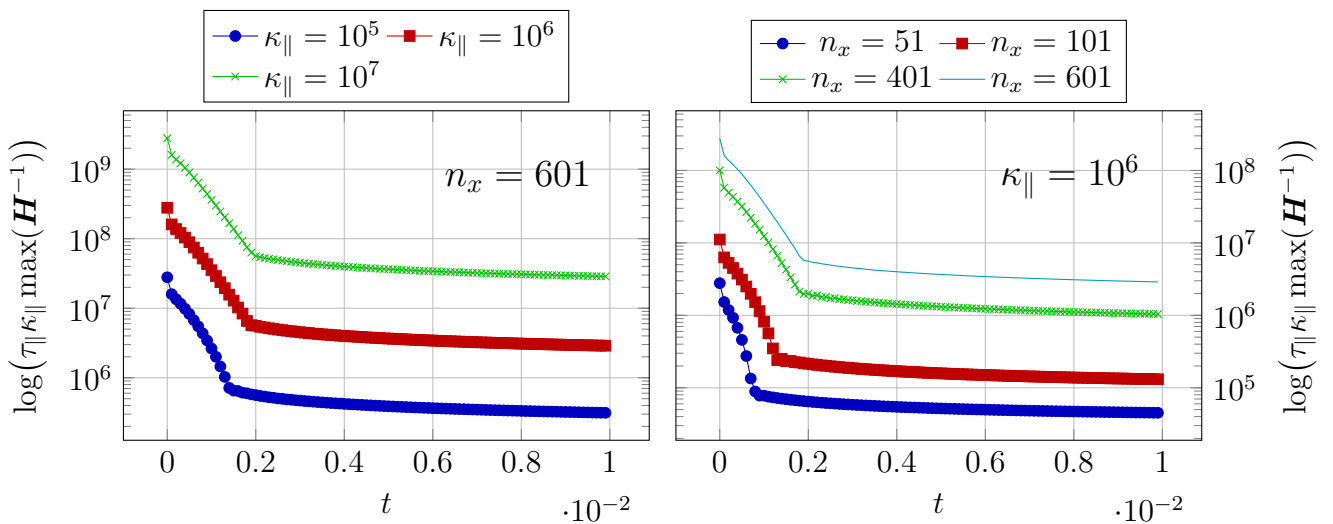
\begin{figure}[H]
    \centering
    \begin{tikzpicture}
    \begin{groupplot}[group style={
            group size=2 by 1,
            x descriptions at=edge bottom,
            horizontal sep  =10pt,
            vertical sep  =10pt,
        },
        my style,
        enlargelimits=true,
        height=0.35\textwidth
    ]
    \nextgroupplot[
            legend entries={
                $\kappa_\parallel=10^5$,
                $\kappa_\parallel=10^6$,
                $\kappa_\parallel=10^7$
            },
            legend style={
                at={([yshift=2pt]0.5,1.29)},
                anchor=north,
            },
            legend columns=2,
            ymode=log,
            xlabel=$t$,
            ylabel=$\log(\tau_\parallel \kappa_\parallel \max(\bm{H}^{-1}))$,
            grid]
        \addplot table[x index=0,y index=2,col sep=comma]{data_SingleIslandField_stretch_tau_O2_k1.0e5_delta0.05_dt4.csv};
        \addplot table[x index=0,y index=2,col sep=comma]{data_SingleIslandField_stretch_tau_O2_k1.0e6_delta0.05_dt4.csv};
        \addplot table[x index=0,y index=2,col sep=comma]{data_SingleIslandField_stretch_tau_O2_k1.0e7_delta0.05_dt4.csv};
        
        \node[] at (rel axis cs: 0.75,0.8) {\large $n_x=601$};

    \nextgroupplot[
            legend entries={
                $n_x=51$,
                $n_x=101$,
                $n_x=401$,
                $n_x=601$
            },
            legend style={
                at={([yshift=2pt]0.5,1.26)},
                anchor=north,
            },
            legend columns=2,
            ymode=log,
            xlabel=$t$,
            ylabel=$\log(\tau_\parallel \kappa_\parallel \max(\bm{H}^{-1}))$,
            yticklabel pos=right,
            grid]

        \addplot table[x index=0,y index=7,col sep=comma]{data_SingleIslandField_stretch_tau_O2_k1.0e6_delta0.05_dt4.csv};
        \addplot table[x index=0,y index=6,col sep=comma]{data_SingleIslandField_stretch_tau_O2_k1.0e6_delta0.05_dt4.csv};
        \addplot table[x index=0,y index=4,col sep=comma]{data_SingleIslandField_stretch_tau_O2_k1.0e6_delta0.05_dt4.csv};
        \addplot table[x index=0,y index=2,col sep=comma]{data_SingleIslandField_stretch_tau_O2_k1.0e6_delta0.05_dt4.csv};
        
        \node[] at (rel axis cs: 0.75,0.8) {\large $\kappa_\parallel=10^6$};

    \end{groupplot}
    \end{tikzpicture}
    
    \caption{Value of the effective penalty parameter \eqref{eq:effective parallel penalty}.
    Left: Fixed grid size $n_x=601$.
    Right: Fixed $\kappa_\parallel=10^6$.}
    \label{fig:penalty during SI test}
\end{figure}

\subsection{Periodic slab}\label{sec:sub:Periodic slab}
As mentioned in the introduction, \citeauthor{hudson_temperature_2008}~\cite{hudson_temperature_2008} showed that contours of the solution to the field aligned anisotropic diffusion equation can be used to reveal structure in the underlying magnetic field. We now seek to show qualitatively our code can extract these features also.
We use the field of \citeauthor{paul_heat_2022}~\cite{paul_heat_2022} as an example, since the field line Hamiltonian ensures $\bm{B}\cdot\bm{n}=0$ on the boundaries. First, specify the field line Hamiltonian by,
\begin{align}
    \chi(\psi,\theta,\zeta) = \psi^2/2 + \sum_m\sum_n \epsilon_{m,n}\psi(\psi-1)\cos(\theta m-\zeta n),
\end{align}
where $m,n\in\mathbb{N}$. Treating the toroidal variable ($\zeta$) as time-like, the Hamiltonian system for this choice of $\chi$ is readily given by,
\begin{align}
    \dv{\theta}{\zeta} = -\pdv{\chi}{\psi},\qquad \dv{\psi}{\zeta} = \pdv{\chi}{\zeta}.
\end{align}
Setting $m=\{2,3\}$, $n=\{1,2\}$ and $2\epsilon_{2,1}=3\epsilon_{3,2}=7.75\times10^{-3}$ generates magnetic islands at $\psi=1/2$ and $\psi=2/3$ and a chaotic region between $0.45\lesssim\psi\lesssim 0.7$. The size of the perturbation is chosen to ensure the generation of islands but is otherwise chosen arbitrarily, as the motivation here is to demonstrate the qualitative results.
We use Dirichlet conditions on the $\psi$ boundaries with $u(0,\theta,t)=0$ and $u(1,\theta,t)=1$ and periodic boundary conditions in $\theta$. The initial condition is a ramp given by $u(\psi,\theta,0)=\psi$. The diffusion coefficients have values $\kappa_\parallel=10^{10}$ and $\kappa_\perp=1$. There is no source term. To resolve the islands a high grid resolution is required. Moreover, because the island chain is centred at roughly $\psi=0.6$ we pack grid points in this region using the function
\begin{align}
    x(s) = x_1 + \alpha \sinh\left( \arcsinh\left(\frac{1-x_1}{\alpha}\right)s + \arcsinh\left(\frac{x_1}{\alpha}\right)(1-s) \right), \qquad s\in[0,1],
\end{align}
$\alpha = (100/(n_x-1))^2$, where in the test $n_x=501$.

Figure \ref{fig:PS contour} shows temperature contours at intervals of $u=0.05$ across the whole plane at a final time of $t_f=10^{-1}$ and set $\Delta t=10^{-5}$. We note the shaping of contours across the region of the island chain. Also of note is the preference for the contours to deform to the shape of the seperatrices on the main two islands. We have also plotted contours in red at values of $u(0.505,-\pi,t_f)$ and $u(0.675,y,t_f)$ which run roughly though the `O' points of the magnetic islands.
\begin{figure}[H]
    \centering
    \includegraphics[width=\textwidth]{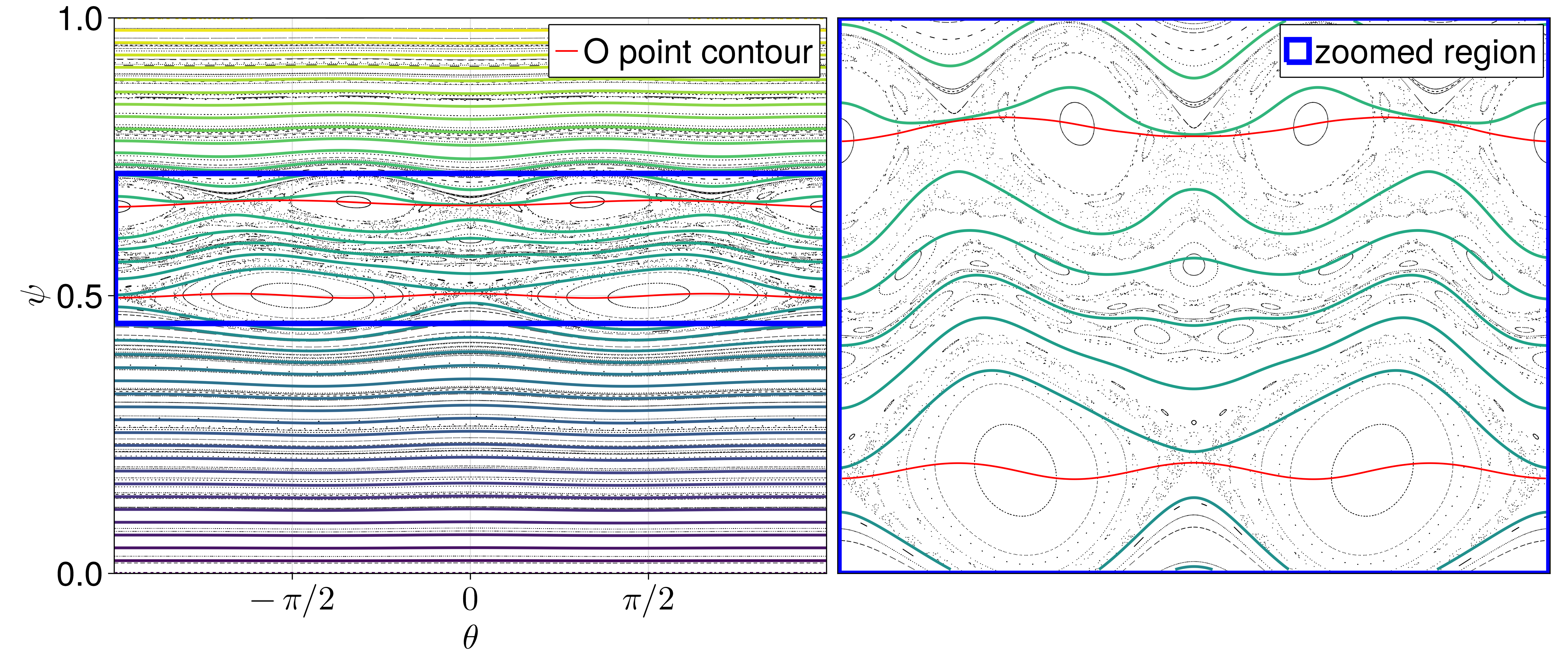}
    \caption{Coloured contours of the temperature at intervals of $u = 0.025$. Red lines lie at temperature values at the O-points of the underlying magnetic field shown as a Poincar\'e plot in black. Contours shown in red are at values of $u(0.5,\pi/2,t_f)$ and $u(0.66,\pi,t_f)$ and connect the O-points in the magnetic islands. The right figure is zoomed to the blue of the left figure to show more detail in the island region.}
    \label{fig:PS contour}
\end{figure}

The structure of the contours can be explained qualitatively as follows; 
field line transport barriers in the magnetic field dynamical system restrict the parallel transport to certain regions of the plane. In the $\psi\lesssim0.5$ and $\psi\gtrsim0.7$ region, away from the islands, the field lines roughly form a set of nested surfaces. This results in the parallel transport deforming the initially straight contours to the same shape of these surfaces.
In the island region, a mixture of the chaotic field lines and those trapped inside the islands cause the parallel transport to flatten the solution profile as it is distributed roughly evenly about the region. 
A more quantitative study relating the contours to the magnetic field structure will follow this paper.

Finally, given the previous two sections, we make a comment on choosing the grid for such problems. Grid points should be chosen to maximise the resolution near islands and chaotic regions. As can be seen in the single island case, the temperature profile far from the islands tends towards interpolating linearly from low to high solution values. The points at which the behaviour changes most rapidly is at the boundary of chaotic or island regions, such as near the separatrix. Therefore grid points should be packed near the separatrix. In the case where there may be more intricate features of the field in the chaotic region such as with the field presented in section 4.4, higher resolution may be required across the entire chaotic region and not just at its boundaries.

\section{Conclusion}\label{sec:Conclusion}

We have derived a provably stable scheme for solving the time-dependant magnetic field aligned anisotropic diffusion equation, with rigorous mathematical support.
In the perpendicular direction we approximate the spatial derivatives using Summation by Parts finite difference operators with weak implementation of boundary conditions by the SAT approach. The parallel diffusion is added by a novel penalty approach, with a nonlinear penalty parameter chosen to minimise the errors due to interpolation.

Stability is proven using the SBP formulation in the plane perpendicular to the field and a parallel operator which models the parallel Laplacian along the magnetic field lines. The stability of the semi-discrete equation is given and the time dependant scheme is shown to be unconditionally stable. 

The fully-discrete algebraic problem is solved by an operator splitting technique which accurately separates the disparate scales in the solution. The technique is shown to also be unconditionally stable.
Numerical experiments are performed to verify the theoretical analysis. The perpendicular solver is shown to converge as expected. Tests with the ``NIMROD benchmark'' \cite{sovinec_nonlinear_2004} have shown spatial and temporal convergence for the full scheme where an analytic solution is known. The benchmark also shows the method is asymptotic preserving.
A self convergence test with a single magnetic island was used to show that method converges and can reproduce the expected features of a field in the simplest case.
For the NIMROD and single island cases we have also shown the behaviour of the nonlinear penalty over the course of the simulation.

Finally we showed that the solver reproduces the features of a sample magnetic field.
The numerical methods developed here are for a Julia code which is available on GitHub at \href{https://github.com/Spiffmeister/FaADE.jl}{github.com/Spiffmeister/FaADE.jl}. Scripts which were used to generate the Figures in this paper are available at \href{https://github.com/Spiffmeister/FaADE_papera}{github.com/Spiffmeister/FaADE\_papera}. This excludes the MMS tests which are available in the main repository.

Future work will extend the method to curvilinear coordinates and investigate solutions to the anisotropic diffusion equation in magnetic fields generated by the \textit{Stepped Pressure Equilibrium Code}\footnote{\href{https://github.com/PrincetonUniversity/SPEC}{github.com/PrincetonUniversity/SPEC}}~\cite{hudson_computation_2012} or other equilibrium codes. Also of interest is investigating the relationship between the contours of the solution and the quadratic flux minimising surfaces.

\section*{Acknowledgements}

Dean Muir would like to thank the Australian Government through the Australian Government Research Training Program (RTP) Scholarship. The authors would like to thank the Simons Collaboration on Hidden Symmetries and Fusion Energy and its many participants for their discussions on the topic.

\appendix
\section{Symmetry and definiteness of the perpendicular operator}\label{sec:appendix:extra mats}

\begin{proof}\label{pf:proof of thm:stability 2d case}[Proof of theorem \ref{thm:spsd_perpendicular_diffution_operator}]
   Multiplying $\bm{P}_\perp \bm{u}$ from the left by $\bm{u}^T\bm{H}$, using the SBP properties \eqref{eq:sbp_xx}-\eqref{defn:eq:fully compatible SBP operator}  and expanding the product gives
    \begin{align*}
        \bm{u}^T\bm{H}\bm{P}_\perp\bm{u} &= 
            -\kappa_\perp\bm{u}^T \bm{D}_x^T\bm{H}\bm{D}_x\bm{u} + \kappa_\perp\bm{u}^T\bm{H}_y\bm{B}_x\bm{D}_x\bm{u} \\
        &\qquad + \tau_{x,0}\kappa_\perp\bm{u}^T(\bm{B}_x\bm{H}_x^{-1}\bm{H}_y\bm{B}_x)\bm{u} + \tau_{x,1}\kappa_\perp\bm{u}^T(\bm{D}_x^T\bm{H}_x^{-1}\bm{B}_x\bm{H}_y)\bm{u}  \\
        &\qquad -\kappa_\perp\bm{u}^T(\bm{D}_y^T\bm{H}\bm{D}_y)\bm{u} + \kappa_\perp\bm{u}^T(\bm{B}_y\bm{H}_x \bm{D}_y)\bm{u}  \\
        &\qquad + \tau_{y,0}\kappa_\perp\bm{u}^T\bm{H}_x(\bm{E}_{1y}+\bm{E}_{n_y})\bm{u} + \tau_{y,1}\kappa_\perp\bm{D}_y^T\bm{H}_x(\bm{E}_{1y}-\bm{E}_{n_y})\bm{u} \\
        &\qquad + \tau_{y,2}\kappa_\perp\bm{u}^T\bm{H}_x(\bm{E}_{1y}-\bm{E}_{n_y})\bm{D}_y\bm{u} - \kappa_\perp\bm{u}^T\bm{H}_y\bm{R}_x\bm{u} - \kappa_\perp\bm{u}\bm{H}_x\bm{R}_y\bm{u},
    \end{align*}
    where $\bm{B}_x = \bm{B}_{n_x}- \bm{B}_{1x}$ and $\bm{B}_y = \bm{B}_{n_y}- \bm{B}_{1y}$ are diagonal matrices extracting the boundary values in $x$-- and $y$--directions.
Choosing $\tau_{x,1}=-1$, $\tau_{x,0}=-(1+\tau_{x,2})$ and introducing,
    \begin{align}
        &(\bm{D}_x-\bm{H}_x^{-1}\bm{B}_x)^T \bm{H}_x (\bm{D}_x-\bm{H}_x^{-1}\bm{B}_x)= \nonumber\\
            &\qquad \bm{D}_x^T\bm{H}_x\bm{D}_x - \bm{D}_x^T\bm{B} - \bm{B}_x^T\bm{D}_x + \bm{B}_x^T\bm{H}_x^{-1}\bm{B}_x,
    \end{align}
then we have,
    \begin{align*}
        \bm{u}^T\bm{H}\bm{P}_\perp\bm{u} &= 
            -\kappa_\perp\bm{u}^T([(\bm{D}_x-\bm{H}_x^{-1}\bm{B}_x)^T \bm{H} (\bm{D}_x-\bm{H}_x^{-1}\bm{B}_x)])\bm{u}   \\
        &\qquad -\tau_{x,2}\kappa_\perp\bm{u}^T(\bm{B}_x\bm{H}^{-1}_x\bm{B}_x\bm{H}_y)\bm{u} \\
        &\qquad- \kappa_\perp\bm{u}^T(\bm{D}_y^T\bm{H}\bm{D}_y)\bm{u} + \kappa_\perp\bm{u}^T(\bm{B}_y\bm{D}_y\bm{H}_x)\bm{u} \\
        &\qquad + \tau_{y,0}\bm{u}\bm{H}_x(\bm{E}_{1y}+\bm{E}_{n_y})\bm{u} + 
            \tau_{y,1}\kappa_\perp\bm{u}^T\bm{H}_x\bm{D}_y^T(\bm{E}_{1y}-\bm{E}_{n_y})\bm{u}  \\
        &\qquad +
            \tau_{y,2}\kappa_\perp\bm{u}^T\bm{H}_x(\bm{E}_{1y}-\bm{E}_{n_y})\bm{D}_y\bm{u} -
        \bm{u}\bm{H}_{y}\bm{R}_x\bm{u} - \bm{u}\bm{H}_{x}\bm{R}_y\bm{u}.
    \end{align*}
Now choosing $\tau_{y,1}=-\tau_{y,2}=\half$ and introducing $$I_{n_x}\otimes I_{n_y} = \underbrace{I_{n_x}\otimes I_{n_y} - (\mathbf{B}_y^T\mathbf{B}_y)}_{\mathbf{I}_0} + (\mathbf{B}_y^T\mathbf{B}_y) = \mathbf{I}_0 + (\mathbf{B}_y^T\mathbf{B}_y),$$ 
so that $\mathbf{I}_0$ extracts the entries internal to the domain and $\mathbf{B}_y^T\mathbf{B}_y$ extracts the $y$ boundaries, we have
    \begin{align*}
        \bm{u}^T\bm{A}_\perp\bm{u} &= -\bm{u}^T\bm{H}\bm{P}_\perp\bm{u}=
            \bm{u}^T\kappa_\perp([(\bm{D}_x-\bm{H}_x^{-1}\bm{B}_x)^T \bm{H}_x (\bm{D}_x-\bm{H}_x^{-1}\bm{B}_x)]\bm{H}_y)\bm{u}   \\
        &+ \tau_{x,2}\kappa_\perp\bm{u}^T(\bm{B}_x\bm{H}^{-1}_x\bm{B}_x\bm{H}_y)\bm{u} +
         \kappa_\perp\bm{u}^T(\bm{D}_y^T\mathbf{I}_0\bm{H}\bm{D}_y)\bm{u} \\
        &+ \kappa_\perp\bm{u}^T(\bm{D}_y^T(\mathbf{B}_y^T\mathbf{B}_y)\bm{H}\bm{D}_y )\bm{u} - 
         \tau_{y,0}\bm{u}^T\bm{H}_x(\bm{E}_{1y} + \bm{E}_{n_y})\bm{u} \\
        &+\half\kappa_\perp\bm{u}^T\bm{D}_y^T\bm{H}_x(\bm{E}_{1y} - \bm{E}_{n_y})\bm{u} + 
         \half\kappa_\perp\bm{u}^T (\bm{E}_{1y} - \bm{E}_{n_y})\bm{H}_x\bm{D}_y\bm{u} 
        \\
        &+ \kappa_\perp\bm{u}\bm{H}_{y}\bm{R}_x\bm{u} + \kappa_\perp\bm{u}\bm{H}_{x}\bm{R}_y\bm{u}.
    \end{align*}
Collecting all the terms with $\bm{E}_{1y}$, $\bm{E}_{n_y}$  and $\mathbf{B}_y^T\mathbf{B}_y$ gives the expression,
    \begin{align*}
     \bm{u}^T\bm{A}_\perp\bm{u} &= 
        \kappa_\perp\bm{u}^T([(\bm{D}_x-\bm{H}_x^{-1}\bm{B}_x)^T \bm{H}_x (\bm{D}_x-\bm{H}_x^{-1}\bm{B}_x)]\bm{H}_y)\bm{u}  \\
        &+ \tau_{x,2}\kappa_\perp\bm{u}^T(\bm{B}_x\bm{H}^{-1}_x\bm{B}_x\bm{H}_y)\bm{u}\\
        &+ \kappa_\perp\bm{u}^T(\bm{D}_y^T\bm{H}\bm{I}_0\bm{D}_y)\bm{u} + 
        \sum_{i=1}^{n_x} \Delta x h_i \bm{v}_i^T \mathcal{A} \bm{v}_i \\
            &+\bm{u}\left(\kappa_\perp\bm{H}_{y}\bm{R}_x + \kappa_\perp\bm{H}_{x}\bm{R}_y\right)\bm{u},\numberthis\label{eq:AP:SPD last}
    \end{align*}
where
    \begin{align}
        \mathcal{A} = \begin{bmatrix}
                -\tau_{y,0}          & \half \kappa_\perp            & \half \kappa_\perp \\
                \half \kappa_\perp  & \kappa_\perp \Delta y h_1     & 0 \\
                \half \kappa_\perp  & 0                             & \kappa_\perp \Delta y h_{n_y}
        \end{bmatrix}, \qquad 
        \bm{v}_i = \begin{bmatrix}
            (\bm{u})_{i,1} - (\bm{u})_{i,n_y} \\ (D_y \bm{u})_{i,1} \\ (D_y u)_{i,n_y}
        \end{bmatrix}.
    \end{align}

The matrix $\mathcal{A}$ is symmetric positive semi-definite if $\tau_{y,0} \leq -\frac{\kappa_\perp}{2\Delta y}\max\left(\frac{1}{h_{n_y}},\frac{1}{h_1}\right)$.
Thus if $\tau_{x,2} \ge 0$ then the matrix $\bm{A}_\perp$ defined by equation \eqref{eq:AP:SPD last} is symmetric positive semi-definite, that is
    \begin{align}
        \bm{u}^T\bm{A}_\perp\bm{u} &\geq 0.
    \end{align}
This completes the proof.
\end{proof}

\section{Conjugate Gradient Algorithm}\label{sec:conjugate_gradient}

The conjugate gradient algorithm used for the perpendicular solve is given below in  Algorithm \ref{alg:cg}. Note that the conjugate gradient solve converges in the norm defined by $\bm{H}$, and \textit{not} in the standard $l^2$ inner product. This is because the perpendicular operator $\bm{P}_\perp$ on line \ref{alg:CG:D} in  Algorithm \ref{alg:cg} below is symmetric negative semi-definite under the scalar product defined by $\bm{H}$, but not in $l^2$.

\SetKwComment{tjl}{\#}{}
\begin{algorithm}[H]
\DontPrintSemicolon
\caption{Conjugate Gradient solve with $\bm{H}$-norm}
\label{alg:cg}
\KwData{right hand side $\bm{b} = \bm{u}^l + (1-\theta)\Delta t \bm{P}_\perp\bm{u}^l + \bar{\bm{F}} + \operatorname{SAT}_D$ where $\operatorname{SAT}_D$ is the boundary data and $\operatorname{SAT}_S$ is the solution component of the SAT,\\
initial guess of solution $\bm{u}^l$}
\KwResult{$\bm{u}^{l+\half}$}

Let $\bm{P}_\perp$ be the perpendicular operator from equation \eqref{eq:Perpendicular Operator} with $\bm{g}=0$. \label{alg:CG:D}\;
Define $\|\bm{u}\|_{\bm{H}}:=\sqrt{\bm{u}^T\bm{H}\bm{u}} \leftarrow \sqrt{\sum_{i,j} u_{ij} h_ih_j u_{ij}\Delta{x}\Delta{y}}$\;
\Begin{
$k = 0$\;
$\bm{u}_k = \bm{u}^{l}$\;
$\bm{r}_k = (\bm{u}_k - \theta\Delta t \bm{P}_\perp\bm{u}_k) - \bm{b}$\;
$\bm{d}_k = -\bm{r}_k$

    \While{$\|\bm{r}_k\|_{\bm{H}}\leq rtol\|\bm{u}_k\|_{\bm{H}}$ {\bf and} $k<MAXIT$}{
        $d_kAd_k = \bm{d}_k^T\bm{H}(\bm{d}_k - \Delta t \theta \bm{P}_\perp\bm{d}_k) = \bm{d}_k^T(\bm{H}+ \Delta t \bm{A}_\perp)\bm{d}_k >0$\;
        $\alpha_k = -\bm{r}_k^T\bm{H}\bm{d}_k/d_kAd_k$\;
        $\bm{u}_{k+1} = \bm{u}_k + \alpha_k\bm{d}_k$\;
        $\bm{r}_k = \bm{u}_{k+1} - \Delta t \theta \bm{P}_\perp\bm{u}_{k+1} - \bm{b}$\;
        $\beta_k = r_kAr_k/d_kAd_k$\;
        $\bm{d}_k = -\bm{r}_k + \beta_k \bm{d}_k$\;
    
        $k = k + 1$\;
}}
\end{algorithm}

\section{Coefficients used in the SBP operators}\label{sec:appendix:coeffs sbp operators}

Here we report the coefficients of the second order stencils for the SBP operators using in this work. The coefficients for higher order stencils become increasingly complex, and so for the coefficients for the higher order stencils we refer readers to GitHub repository for this project \href{https://github.com/Spiffmeister/FaADE.jl}{github.com/Spiffmeister/FaADE.jl}, or to the appendix of \cite{mattsson_summation_2012}.

The $H$ operator in the second order case is given by,
\begin{align}
    H = \operatorname{diag}(1/2, 1, \dots, 1, 1/2).
\end{align}

A $2p$th order SBP scheme has order $p$ on the boundaries. The internal nodes of the 1D first derivative SBP operator 
\begin{align}
    D_x = \frac{1}{\Delta x}\begin{bmatrix}
-1 & 1 &  &  &  \\
-1 & 0 & 1 &  &  \\
 &  & \ddots &  &  \\
 &  & -1 & 0 & 1 \\
 &  &  & -1 & 1 
\end{bmatrix}
\end{align}

Recall the derivative term in the variable coefficient second derivative SBP operator is given by $M^{(k)}=D_x^THK_xD_x$. This gives the the internal nodes,
\begin{align}
    M^{(k)}_{i,i-1} &= -\frac{1}{2\Delta x^2} ([K_x]_i + [K_x]_{i-1}), \\
    M^{(k)}_{i,i} &= \frac{1}{2\Delta x^2} ([K_x]_{i+1} + [K_x]_i + [K_x]_{i-1}), \\
    M^{(k)}_{i,i+1} &= -\frac{1}{2\Delta x^2} ([K_x]_{i+1} + [K_x]_i).
\end{align}
The boundary nodes are given by,
\begin{align}
    [D_{xx}^{(k)}]_1 = [D_{xx}^{(k)}]_N = 0.
\end{align}
We refer readers to appendix A of \cite{mattsson_summation_2012} for higher order operators.

\section{Proof of theorem \ref{theo:well-posedness}, well-posedness of field aligned anisotropic diffusion equation}
Since we are interested in the energy estimate for our numerical scheme we reproduce the proof of theorem \ref{theo:well-posedness} below.
\begin{proof}
    We set $F = 0$ and multiply \eqref{eq:ADE field aligned split} by $u$ and integrate over the spatial domain $\Omega$, we have
    \begin{align}
        \int_\Omega u\dv{u}{t} \;\dd x\dd y     &= \int_\Omega u\nabla\cdot(\kappa\nabla u) \;\dd x\dd y + \int_\Omega u \mathcal{P}_\parallel u \;\dd x\dd y.
    \end{align}
    Integration by parts gives
    \begin{align}
        \frac{1}{2}\dv{t}\|u\|^2   &= -\int_\Omega \kappa_\perp \nabla u\cdot \nabla u\;\dd x\dd y   + \int_\Omega u \mathcal{P}_\parallel u \;\dd x\dd y \\
        \nonumber
        &+ \int_{y_L}^{y_R}\left.u\kappa_\perp\pdv{u}{x}\right|_{x_L}^{x_R} \dd y + \int_{x_L}^{x_R}\left.u\kappa_\perp\pdv{u}{y}\right|_{y_L}^{y_R} \dd x.
    \end{align}
    Imposing the periodic boundary condition \eqref{eq:Boundary y periodic} and the Dirichlet boundary condition \eqref{eq:Boundary x Dirichlet} with homogeneous data $g_L(y,t)=g_R(y,t)=0$ eliminates the boundary terms. Adding the conjugate transpose gives,
    \begin{align*}%
        \dv{t}\|u\|^2 &= -2\int_\Omega \nabla u\cdot \kappa_\perp\nabla u\;\dd x\dd y + \int_\Omega u(\mathcal{P}_\parallel + \mathcal{P}_\parallel^\dagger) u \; \dd x\dd y \le 0.
    \end{align*}
    Thus we have
     \begin{align*}
        \dv{t}\|u\|^2 \leq 0 \implies \|u\| \le \|f\|.
     \end{align*}
    This completes the proof.
\end{proof}

\printbibliography

\end{document}